


\documentclass[ 12pt,reqno]{amsart} 
\usepackage{amssymb,amsmath,amsthm,hyperref} 
\usepackage{euscript}
\usepackage{a4wide}
\usepackage[T2A]{fontenc}
\usepackage[utf8]{inputenc}
\usepackage{amsfonts}
\usepackage{amssymb, amsthm}
\usepackage{amsmath}
\usepackage{cite}
\usepackage{graphicx}
\usepackage{tikz}
\linespread{1.2}
\usetikzlibrary{calc}
\usetikzlibrary{decorations.pathmorphing}
\theoremstyle{plain}
\newtheorem{thm}{Theorem}
\newtheorem{prop}{Proposition}

\newtheorem{remark}{Remark}

\newtheorem{conj}{Conjecture}

\renewcommand{\C}{\mathbb{C}}

\newcommand{\RK}{\rotatebox[origin=c]{180}{RK}}
\newcommand{\KR}{\operatorname{KR}}
\renewcommand{\phi}{\varphi}
\numberwithin{equation}{section} 
\numberwithin{thm}{section} 
\numberwithin{prop}{section} 
\numberwithin{defn}{section} 
\numberwithin{remark}{section}

\title{Further $q$-reflections on the modulo $9$ Kanade--Russell (conjectural) identities}

 \author{Stepan Konenkov}
 \address{Department of Mathematics and Computer Science, Saint Petersburg State University, St. Petersburg 199178, Russia}
 \email{konenkov.stepan@yandex.ru.}

\begin{document} 
\date{\today}

\subjclass[2010]{11P84, 05A15, 05A17, 11B65}

\keywords{Kanade--Russell conjectures, polynomial identities, generating functions of partitions, experimental mathematics}

\begin{abstract}
We examine four identities conjectured by Dean Hickerson which complement five modulo $9$ Kanade--Russell identities, and we build up a profile of new identities and new conjectures similar to those found by Ali Uncu and Wadim Zudilin.
\end{abstract}

\maketitle
\pagestyle{plain}
\section{Introduction}
This work takes place in a relatively new and rapidly developing area of algebra, number theory, and combinatorics, the Shashank Kanade and Matthew Russell conjectural identities \cite{KR1, M1}, which turn out to be double-sum Rogers--Ramanujan type identities after work of Ka\u{g}an Kur\c{s}ung\"oz \cite{Ku}.   For some recent applications in algebra, see for example \cite{PSW, Tsuch}.  Our study is motivated by Ali Uncu and Wadim Zudilin's recent work on the Kanade--Russell identities\cite{UZ}, and Dean Hickerson's subsequent conjectures  related to work of Uncu and Zudilin \cite{Hick}.  In this paper we will develop a profile for Hickerson's conjectures in the context of work of Uncu and Zudilin on the Kanade--Russell identities.

We begin with the partition-theoretic formulations of the first four Kanade--Russell conjectures \cite[$(I_1)$-$(I_4)$]{KR1}, \cite{M1}.  For example $(I_1)$ becomes 

\smallskip
{\em The number of partitions of a non-negative integer n into parts congruent to $1$, $3$, $6$, or $8$ $\pmod 9$ is the same as the number of partitions of $n$ with difference at least $3$ at distance $2$ such that if two consecutive parts differ by at most $1$, then their sum is divisible by $3$.}

\smallskip
\noindent The other formulations \cite[$(I_2)$-$(I_4)$]{KR1}, \cite{M1} are similar.   Each formulation is the conjectured equality of two generating functions.  For each conjecture, it is easy to see that one of the two generating functions is a simple infinite product.  Subsequently, Kur\c{s}ung\"oz used keen partition methods to show that each of the complementary generating functions is in fact a double-sum, thus giving Andrews--Gordon type series \cite{Ku}.  For example, $(I_1)$ reads
\begin{equation*}
\sum\limits_{m,n\geq 0} \frac{q^{m^2+3mn+3n^2}}{(q;q)_m (q^3;q^3)_n}
\stackrel{?}{=}\frac{1}{(q,q^3,q^6,q^8;q^9)_{\infty}}.
\end{equation*}

Building on work of Kur\c{s}ung\"oz \cite{Ku}, Uncu and Zudilin took an experimental approach to understanding the world of Kanade--Russell conjectures by considering the $q\to q^{-1}$ reflections of the unproved identities.  This approach is motivated by George Andrews's duality theory for Rogers--Ramanujan type identities \cite{AND2} with respect to Rodney Baxter's solution to the hard hexagon model of statistical mechanics \cite{Baxter}.   In physics, $q\to q^{-1}$ sometimes has meaning in terms of flows; in other words, you are converging to different conformal field theories whose representation theory is described by characters given by limits of the polynomials being considered.  One can also use the $q\to q^{-1}$ as a guide to go from mock theta function identities to partial theta function identities \cite{Mort}.

Andrews \cite{AND2} found finite-sum, i.e. polynomial, $q$-series identities which approach Rogers--Ramanujan identities in the limit.  If one replaces $q$ with $q^{-1}$ in the finite-sum identities, multiplies by a high enough power of $q$, and lets the number of terms go to infinity, then one obtains new Rogers--Ramanujan type identities.  Uncu and Zudilin started off with five Kanade--Russell identities, found finite versions, studied their reflections, and built up a profile of new and conjectural identities.   Even more recently, Hickerson \cite{Hick} conjectured four more double-sum Rogers--Ramanujan type identities which complement the original five studied by Uncu and Zudilin.

We briefly review the duality theory of Andrews \cite{AND2} for Rogers--Ramanujan type identities with respect to Baxter's solution to the hard hexagon model of statistical mechanics \cite{Baxter}. Although the $q \rightarrow q^{-1}$ transform makes sense for a $q$-hypergeometric series, it does not for a product. In a duality theory initiated by Andrews \cite{AND1} for various sets of identities of Rogers--Ramanujan type, Andrews used finite versions, i.e. polynomial or rational function identities, which converge to infinite $q$-series in the limit.  To state an example, we first introduce some standard notation.

We define $q$-Pochhammer notation and the Gaussian binomial coefficient \cite{AND3}.  Let $|q|<1$ be a nonzero complex number.  We define
\begin{gather*}
(x)_n=(x;q)_n:=\prod_{i=0}^{n-1}(1-q^ix), \ \ (x)_{\infty}=(x;q)_{\infty}:=\prod_{i\ge 0}(1-q^ix).
\end{gather*} 
Let $m,n\in\mathbb{Z}$. The Gaussian binomial coefficient is then defined by the product 
\begin{equation*}
 \begin{bmatrix} m\\ n\end{bmatrix}_q:=
 \begin{cases} \frac{(q;q)_m}{(q;q)_n(q;q)_{m-n}}
 & \textup{if} \ m \ge n \ge 0,\\
 0&  \textup{if} \ n<0 \ \textup{or} \ m<n.
 \end{cases}
 \end{equation*}

To translate between identities for Regions I and IV from \cite{AND2, Baxter}, Andrews used identities which have origins in work of Schur \cite{AND1,Schur}, 
\cite[4.1, 4.2]{AND2}:
\begin{align}
\sum_{j=0}^{\infty}q^{j^2}\begin{bmatrix} N-j \\ j   \end{bmatrix}_q
&=\sum_{\lambda=-\infty}^{\infty}(-1)^{\lambda}q^{\lambda(5\lambda+1)/2}\begin{bmatrix} N \\ \lfloor \frac{N-5\lambda}{2}   \rfloor  \end{bmatrix}_q,\label{equation:Andrews-4.1}\\
\sum_{j=0}^{\infty}q^{j^2+j}\begin{bmatrix} N-j \\ j   \end{bmatrix}_q
&=\sum_{\lambda=-\infty}^{\infty}(-1)^{\lambda}q^{\lambda(5\lambda-3)/2}\begin{bmatrix} N+1 \\ \lfloor  \frac{N+1-5\lambda}{2}   \rfloor  +1 \end{bmatrix}_q,\label{equation:Andrews-4.2}
\end{align}
where $\lfloor x \rfloor$ is the greatest integer less than or equal to $x$. As $N\rightarrow \infty$, we can apply the Jacobi Triple Product Identity to the right-hand side of each of the two equations, and they become the Rogers--Ramanujan identities, i.e. for Region I:  
\begin{align*}
\sum_{n=0}^{\infty}\frac{q^{n^2}}{(q)_n}&=\frac{1}{(q;q^5)_{\infty}(q^4;q^5)_{\infty}},\\
\sum_{n=0}^{\infty}\frac{q^{n^2+n}}{(q)_n}&=\frac{1}{(q^2;q^5)_{\infty}(q^3;q^5)_{\infty}}.
\end{align*}
To obtain the identities for Region IV, replace $N$ with $2N+a$, $a\in\{0,1\}$,  then replace $q$ with $q^{-1}$, multiply by a power of $q$ to clear any negative exponents of $q$, and then let $N\rightarrow \infty$.  Identity (\ref{equation:Andrews-4.1}) implies
{\allowdisplaybreaks \begin{align}
\sum_{n=0}^{\infty}\frac{q^{n^2}}{(q)_{2n}}&=\frac{1}{(q;q^2)_{\infty}(q^4,q^{16};q^{20})_{\infty}},
\label{equation:RRtype1}\\
\sum_{n=0}^{\infty}\frac{q^{n^2+n}}{(q;q)_{2n+1}}&=\frac{1}{(q,q^2,q^8,q^9;q^{10})_{\infty}(q^5,q^6,q^{14},q^{15};q^{20})_{\infty}}.\label{equation:RRtype2}
\end{align}}%
We point out that (\ref{equation:RRtype1}) is found in \cite[p. 331]{Rog} and that (\ref{equation:RRtype2}) is implied by equations $2$ and $3$ in \cite[p. 330]{Rog}.  Identity (\ref{equation:RRtype2}) is also found in \cite[(94)]{Slat}

Uncu and Zudilin examined the five modulo 9 Kanade--Russell identities  \cite[(11)-(15)]{UZ} through the $q\to q^{-1}$ reflection.  They took an approach similar to Andrews's.  However, after applying $q\rightarrow q^{-1}$ to a given finite version of one of the double-sum Rogers--Ramanujan type identities, they obtain three new conjectural identities instead of two.  This is because one must now consider the finite parameter $N$ modulo $3.$  In the profiles developed by Uncu and Zudilin they thus obtain fifteen new conjectural identities; however, they were always able to prove one in each group of three.

Recently, Hickerson discovered four new conjectural double-sum identities, which complete the original list of five.  As we will see in Section \ref{section:Hickerson}, Hickerson \cite{Hick} showed that the $\C(q)$-span of a certain generic building block is $9$-dimensional over $\mathbb{C}(q)$, with five independent elements given by Kanade and Russell.  The remaining four were found by Hickerson.

In this work, we will study the $q$-reflections of Hickerson's identities and build up a family of new identities and conjectural identities analogous to work of Uncu and Zudilin.  We will begin with Hickerson's four conjectural double-sum Rogers--Ramanujan type identities.  In the course of this step, we make modifications to partition theoretic arguments that  Uncu and Zudilin used to find finite polynomial versions of the Kanade--Russell identities.  We point out that Uncu and Zudilin's partition-theoretic methods \cite{UZ} build on those of Kur\c{s}ung\"oz \cite{Ku, Ku2}.  We then find finite polynomial versions of Hickerson's double-sums, see Propositions \ref{theorem:MainFiniteList}, \ref{theorem:SecondFiniteList}.

Once we have finite polynomial versions of the double-sums, we make the $q\to q^{-1}$ reflection and proceed to determine the double-sum sides of what one hopes to be new identities and conjectures.  We show that each of Hickerson's original four double-sums lead to three new double-sums, see Theorems \ref{theorem:S11-limits}, \ref{theorem:S24-limits}, \ref{theorem:S0-1-limits}, and \ref{theorem:S02-limits}.  Our situation is slightly different than that of Uncu and Zudilin, so it is not clear how many new double-sums Rogers--Ramanujan type identities we should have.  Using $q$-series techniques and properties of $q$-binomial coefficients, we determine and prove four new identities, see Theorem \ref{stats}.  We are also able to find closed forms of two conjectural identities, see Conjectures \ref{conj:Conjecture-1} and \ref{conj:Conjecture-2}.  However, taking conjugates of the two new conjectural identities yields an additional two conjectures, which brings us to four new conjectural identities.

In contrast, the five identities studied by Uncu and Zudilin led to five new and ten conjectural identities.  Here we appear to be missing some conjectural identities, but that will be the subject of future work.

\section{Motivation and known results}

We review some basic facts and refer the reader to \cite{AND3} for details on the $q$-Pochhammer symbol and $q$-binomial coefficients.  We also recall useful properties of the $q$-binomial coefficient.

\begin{thm}\cite[Theorem $3.2$]{AND3} \label{thm: prop_bin_coeff}
Let $0\le m\le n$ be integers.  The Gaussian polynomial $\begin{bmatrix}n\\m\end{bmatrix}_q$ is a polynomial of degree $m(n-m)$ satisfying
\begin{enumerate}
\item $\begin{bmatrix}n\\0\end{bmatrix}_q=\begin{bmatrix}n\\n\end{bmatrix}_q=1$,
\item $\begin{bmatrix}n\\m\end{bmatrix}_q=\begin{bmatrix}n\\n-m\end{bmatrix}_q$,
\item $\begin{bmatrix}n\\m\end{bmatrix}_q=\begin{bmatrix}n-1\\m\end{bmatrix}_q
+q^{n-m}\begin{bmatrix}n-1\\m-1\end{bmatrix}_q$ for $n\ge1$,
\item $\begin{bmatrix}n\\m\end{bmatrix}_q=\begin{bmatrix}n-1\\m-1\end{bmatrix}_q
+q^{m}\begin{bmatrix}n-1\\m\end{bmatrix}_q$ for $n\ge1$.
\end{enumerate}
\end{thm}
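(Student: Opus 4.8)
The statement is classical --- it is \cite[Theorem 3.2]{AND3} --- so the plan is simply to derive everything from the product definition
\[
\begin{bmatrix}n\\m\end{bmatrix}_q=\frac{(q;q)_n}{(q;q)_m\,(q;q)_{n-m}},\qquad 0\le m\le n,
\]
together with the boundary convention $\begin{bmatrix}n\\m\end{bmatrix}_q=0$ for $m<0$ or $m>n$. First I would dispose of (1) and (2): since $(q;q)_0$ is an empty product it equals $1$, so substituting $m=0$ or $m=n$ gives $\begin{bmatrix}n\\0\end{bmatrix}_q=\begin{bmatrix}n\\n\end{bmatrix}_q=(q;q)_n/(q;q)_n=1$; and (2) is immediate because the right-hand side of the definition is visibly invariant under $m\mapsto n-m$.

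For the two Pascal-type recurrences (3) and (4), the engine is the factorization $(q;q)_n=(1-q^n)(q;q)_{n-1}$ together with the two elementary decompositions
\[
1-q^n=(1-q^{n-m})+q^{n-m}(1-q^m)=(1-q^m)+q^{m}(1-q^{n-m}).
\]
Inserting the first decomposition into the definition and cancelling $(1-q^{n-m})$ against a factor of $(q;q)_{n-m}$ in one summand and $(1-q^m)$ against a factor of $(q;q)_m$ in the other produces exactly $\begin{bmatrix}n-1\\m\end{bmatrix}_q+q^{n-m}\begin{bmatrix}n-1\\m-1\end{bmatrix}_q$, which is (3); inserting the second decomposition produces (4) the same way. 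This manipulation is literal for $1\le m\le n-1$; for $m=0$ and $m=n$ one verifies the two identities directly, the terms $\begin{bmatrix}n-1\\-1\end{bmatrix}_q$ and $\begin{bmatrix}n-1\\n\end{bmatrix}_q$ vanishing by the boundary convention.

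Finally, I would obtain polynomiality and the degree formula simultaneously by induction on $n$ using (3). The base case $\begin{bmatrix}n\\0\end{bmatrix}_q=1$ has degree $0=0\cdot n$, and the case $m=n$ is the trivial $\begin{bmatrix}n\\n\end{bmatrix}_q=1$ of degree $0=n\cdot 0$. For $1\le m\le n-1$, the right-hand side of (3) is, by the inductive hypothesis, a sum of two polynomials of degrees $m(n-1-m)$ and $(n-m)+(m-1)(n-m)=m(n-m)$; since $m(n-m)-m(n-1-m)=m\ge1$, the second term strictly dominates, so the sum is a polynomial of degree exactly $m(n-m)$. There is essentially no obstacle here; the only point that demands a little care is consistent bookkeeping of the boundary values $\begin{bmatrix}N\\k\end{bmatrix}_q=0$ for $k<0$ or $k>N$, so that (3)--(4) hold for every $m\in\{0,\dots,n\}$ and the induction closes cleanly at the endpoints.
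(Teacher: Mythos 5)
Your proof is correct and complete: the paper itself gives no proof of this statement, citing it as Theorem 3.2 of Andrews's \emph{The Theory of Partitions}, and your argument (symmetry and the boundary values directly from the product definition, the two Pascal-type recurrences from the decompositions $1-q^n=(1-q^{n-m})+q^{n-m}(1-q^m)=(1-q^m)+q^m(1-q^{n-m})$, and polynomiality plus the degree formula by induction on $n$ via recurrence (3)) is precisely the standard one found there. The endpoint bookkeeping with the convention $\begin{bmatrix}N\\k\end{bmatrix}_q=0$ for $k<0$ or $k>N$ is handled correctly, so nothing is missing.
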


We recall the double-sum form of five partition-theoretic identities discovered by Kanade and Russell \cite[$(I_1)$-$(I_4)$]{KR1}, \cite{M1}:
{\allowdisplaybreaks \begin{gather}
    \KR_1(q):=\sum\limits_{m,n\geq 0} \frac{q^{m^2+3mn+3n^2}}{(q;q)_m (q^3;q^3)_n}\stackrel{?}{=}\frac{1}{(q,q^3,q^6,q^8;q^9)_{\infty}}, \label{KR1}\\
    \KR_2(q):=\sum\limits_{m,n\geq 0} \frac{q^{m^2+3mn+3n^2+m+3n}}{(q;q)_m (q^3;q^3)_n}\stackrel{?}{=}\frac{1}{(q^2,q^3,q^6,q^7;q^9)_{\infty}}, \label{KR2}\\
    \KR_3(q):=\sum\limits_{m,n\geq 0} \frac{q^{m^2+3mn+3n^2+2m+3n}}{(q;q)_m (q^3;q^3)_n}\stackrel{?}{=}\frac{1}{(q^3,q^4,q^5,q^6;q^9)_{\infty}}, \label{KR3}\\
    \KR_4(q):=\sum\limits_{m,n\geq 0} \frac{q^{m^2+3mn+3n^2+m+2n}}{(q;q)_m (q^3;q^3)_n}\stackrel{?}{=}\frac{1}{(q^2,q^3,q^5,q^8;q^9)_{\infty}}, \label{KR4}\\
    \KR_5(q):=\sum\limits_{m,n\geq 0} \frac{q^{m^2+3mn+3n^2+2m+4n}(1+q+q^{m+3n+2})}{(q;q)_m (q^3;q^3)_n}\stackrel{?}{=}\frac{1}{(q,q^4,q^6,q^7;q^9)_{\infty}}. \label{KR5}
\end{gather}}%
The first four double-sums on the right-hand sides are due to Kur\c{s}ung\"oz \cite{Ku}.    As is pointed out in \cite{UZ}, the fifth conjecture's combinatorial interpretation is found in \cite{M1} and the double-sum form can be constructed using Kur\c{s}ung\"oz's technique.    Whereas the infinite products of the first three identities are each symmetric, the infinite products of the last two identities are dual to each other in the sense discussed in \cite{AlAn}.  

\subsection{Known results}
Uncu and Zudilin devised combinatorial methods to create finite versions of identities $\KR_{1}(q)$ through $\KR_{5}(q)$.  We point out that where Andrews's finite versions were parametrized by $N$ and he had to consider the cases $N=2M$ and $N=2M+1$, Uncu and Zudilin's finite versions were parametrized by $N$ where they considered the cases $N=3M+a$, $a\in\{ 0,1,2 \}$.  We provide a few examples from their work to motivate our approach.

The finite versions for the first four Kanade--Russell identities read \cite[(16)-(19)]{UZ}:
{\allowdisplaybreaks \begin{align*}
\KR_1(q;N)&=\sum\limits_{n,m\geq 0} q^{m^2+3mn+3n^2}\begin{bmatrix}
N-m-3n+1\\m
\end{bmatrix}_{q}\begin{bmatrix}
\lfloor\frac{2}{3}N\rfloor-m-n+1\\n
\end{bmatrix}_{q^3},\\
\KR_2(q;N)&=\sum\limits_{n,m\geq 0} q^{m^2+3mn+3n^2+m+3n}\begin{bmatrix}
N-m-3n\\m
\end{bmatrix}_{q}\begin{bmatrix}
\lfloor\frac{2}{3}N\rfloor-m-n\\n
\end{bmatrix}_{q^3},\\
\KR_3(q;N)&=\sum\limits_{n,m\geq 0} q^{m^2+3mn+3n^2+2m+3n}\begin{bmatrix}
N-m-3n-1\\m
\end{bmatrix}_{q}^{\star} \begin{bmatrix}
\lfloor\frac{2}{3}N\rfloor-m-n\\n
\end{bmatrix}_{q^3},\\
 \KR_4(q;N)&=\sum\limits_{n,m\geq 0} q^{m^2+3mn+3n^2+m+2n}\begin{bmatrix}
N-m-3n\\m
\end{bmatrix}_q \begin{bmatrix}
\lfloor\frac{2}{3}(N-1)\rfloor-m-n+1\\n
\end{bmatrix}_{q^3}.
\end{align*}}%

\begin{remark}  In \cite{UZ}, they impose the additional condition that the $q$-binomial coefficient $\left [ \begin{matrix} N-3n-m-1\\m \end{matrix}\right]_q^{\star}$ is understood to be $1$ when it becomes $\begin{bmatrix} -1\\0 \end{bmatrix}_q$, which only happens when $N\equiv 0 \pmod 3$.
\end{remark}

The finite version for $\KR_5(q)$ is more involved and reads \cite[(20)]{UZ}:
{\allowdisplaybreaks \begin{align*}
    &\KR_5(q;N)=\sum\limits_{n,m\geq 0} q^{m^2+3mn+3n^2+2m+4n}(1+q) \\
    &\qquad \qquad \qquad \qquad \times \begin{bmatrix}
N-m-3n-1\\m
\end{bmatrix}_q  \begin{bmatrix}
\lfloor\frac{2}{3}(N-2)\rfloor-m-n+1\\n
\end{bmatrix}_{q^3}\\
&\qquad+ \sum\limits_{n,m\geq 0} q^{m^2+3mn+3n^2+3m+7n+2} \begin{bmatrix}
N-m-3n-2\\m
\end{bmatrix}_q \begin{bmatrix}
\lfloor\frac{2}{3}(N-2)\rfloor-m-n+ \delta_{3\mid (N-2)}\\n
\end{bmatrix}_{q^3}.
\end{align*}}%

Uncu and Zudilin studied the $q\to q^{-1}$ reflections of the finite versions of the Kanade--Russell identities.  We recall what happens when one makes the $q\to q^{-1}$ transformation in the $q$-Pochhammer symbol and $q$-binomial coefficient.  If we take $\rho = q^{-1}$, then
\begin{equation}
(a;\rho)_{n}=(a^{-1};q)_{n}(-a)^{n}\rho^{\binom{n}{2}}.
\end{equation}
If we take $\rho=q^{-1}$, then  
\begin{equation}
\begin{bmatrix}n+m\\m\end{bmatrix}_{\rho}=q^{-nm}\begin{bmatrix}n+m\\m\end{bmatrix}_q.
\end{equation}

Let us recall Warnaar's conjectures \cite{UZ}.  If we define
\begin{align*}
    \RK_4(q,3M)&:=q^{M(3M+2)}\KR_4(1/q,3M),\\
    \RK_4(q,3M+1)&:=q^{M(3M+5)}\KR_4(1/q,3M+1),\\
    \RK_4(q,3M+2)&:=q^{(M+1)(3M+2)}\KR_4(1/q,3M+2),
\end{align*}
then 
\begin{align*}
    \RK_4(q,3\infty)&:=\lim\limits_{M\rightarrow\infty}\RK_4(q,3M)
    \stackrel{?}{=}\frac{1}{(q^2;q^3)_{\infty}(q^3,q^9,q^{12},q^{21},q^{30},q^{36},q^{39};q^{45})_{\infty}},\\
    \RK_4(q,3\infty+1)&:=\lim\limits_{M\rightarrow\infty}\RK_4(q,3M+1)
    \stackrel{?}{=}\frac{1}{(q^2;q^3)_{\infty}(q^3,q^{12},q^{18},q^{21},q^{27},q^{30},q^{39};q^{45})_{\infty}},\\
    \RK_4(q,3\infty+2)&:=\lim\limits_{M\rightarrow\infty}\RK_4(q,3M+2)
     \stackrel{?}{=} \ \RK_4(q,3\infty)+q^2\RK_4(q,3\infty+1).
\end{align*}

There is another type of form for the limits of the finite $q$-reflections.  For example, we have \cite[(28)-(29)]{UZ} read
{\allowdisplaybreaks \begin{align*}
\RK_1(q,3\infty )&=\lim\limits_{M\rightarrow\infty}q^{3M(M+1)}\KR_1(q^{-1},3M)
=\sum\limits_{m,n\geq 0} \frac{q^{a^2-3ab+3b^2-1}}{(q^3;q^3)_b}
\begin{bmatrix}3b-1-1\\a\end{bmatrix}_q\\
& \stackrel{?}{=}
\langle 2,8,11,20 \rangle + q^{3} \langle 2,14,20,22 \rangle - q^{8} \langle 17,19,20,22 \rangle\\
&=\langle 1,8,13,20 \rangle - q \langle 4,7,13,20 \rangle + q^{5} \langle 7,16,17,20 \rangle,
\end{align*}}%
and
\begin{align*}
\RK_1(q,3\infty + 2)&=\lim\limits_{M\rightarrow\infty}q^{3(M+1)^2}\KR_1(q^{-1},3M+2)
=\sum\limits_{m,n\geq 0} \frac{q^{a^2-3ab+3b^2}}{(q^3;q^3)_b}
\begin{bmatrix}3b-a\\a\end{bmatrix}_{q}\\
& \stackrel{?}{=}
\langle 1,7,11,20 \rangle + q^{6} \langle 11,13,14,20 \rangle - q^{6} \langle 8,14,19,20 \rangle\\
&=\langle 1,4,17,20 \rangle - q^4 \langle 2,16,19,20 \rangle - q^{5} \langle 4,16,20,22 \rangle,
\end{align*}
where 
\begin{equation*}
\langle c_1,c_2,c_3,c_4 \rangle = \frac{(q^{45};q^{45})_{\infty}}
{(q^{3};q^{3})_{\infty}\prod_{j=1}^{4}(q^{c_j};q^{45-c_j};q^{45})_{\infty}}.
\end{equation*}
We also have the complementary \cite[(35)]{UZ} 
\begin{align*}
\RK_1(q,3\infty + 1)&=\lim\limits_{M\rightarrow\infty}q^{3M(M+1)+1}\KR_1(q^{-1},3M+1)
=\sum\limits_{m,n\geq 0} \frac{q^{a^2-3ab+3b^2}}{(q^3;q^3)_b}
\begin{bmatrix}3b-a+1\\a\end{bmatrix}_{q}\\
& =q\RK_1(q,3\infty )+\RK_1(q,3\infty + 2).
\end{align*}

To better understand the Kanade--Russell double-sums, let us introduce the principal series that we want to study.  We have
\begin{equation}
S(a,b;q):=\sum\limits_{m,n\geq 0} \frac{q^{m^2+3mn+3n^2+am+bn}}{(q;q)_m (q^3;q^3)_n}.
\label{equation:S-def}
\end{equation}

\noindent Let us rewrite the Kur\c{s}ung\"oz double-sums in terms of function (\ref{equation:S-def}):
{\allowdisplaybreaks \begin{gather}
\KR_1(q)=S(0,0;q),\label{equation:SKR1}\\
\KR_2(q)=S(1,3;q),\label{equation:SKR2}\\
\KR_3(q)=S(2,3;q),\label{equation:SKR3}\\
\KR_4(q)=S(1,2;q),\label{equation:SKR4}\\
\KR_5(q)=(1+q)S(2,4;q)+q^2S(3,7;q).\label{equation:SKR5}
\end{gather}}%

We are now ready to introduce Hickerson's recent conjectures.

\section{Hickerson's (conjectural) identities}\label{section:Hickerson}

Hickerson \cite{Hick} showed that the $\C(q)$-span of $S(a,b)$ (see (\ref{equation:S-def})) is $9$-dimensional over $\mathbb{C}(q)$, with five independent elements coming from the combined work of Kanade and Russell, Kur\c{s}ung\"oz, and Uncu and Zudilin.  The remaining four were found by Hickerson.

Hickerson \cite{Hick} observed that the sums in identities (\ref{KR1})-(\ref{KR5}) can be written in terms of $S(a,b)$ and that one can obtain contiguous relations of them. We obtain the first relation as follows. Multiply both the numerator and denominator by  $1-q^{m+1}$  and  replace $m$ by $m-1$ to obtain
\begin{equation}
     S(a,b) = q^{1-a} (S(a-2,b-3) - S(a-1,b-3)).                    \label{equation:A}
\end{equation}
For the second relation, multiply the numerator and denominator by  $1-q^{3n+3}$  and  
replace $n$ by $n-1$ to obtain
\begin{equation}
     S(a,b) = q^{3-b} (S(a-3,b-6) - S(a-3,b-3)).                    \label{equation:B}
\end{equation}
Using (\ref{equation:A}) and (\ref{equation:B}) one can show  
that all values of $S(a,b)$ are linear combinations, with coefficients  
that are Laurent polynomials in $q$, of just nine specific values of $S(a,b)$, say with $0\le a\le 2$ and $0\le b\le 2$.
For example, using (\ref{equation:A}) we can simplify the right-hand side of equation of $\KR_{5}(q)$ in (\ref{equation:SKR5}); it equals
{\allowdisplaybreaks \begin{align*}
     (1+q) S(2,4) + q^2 S(3,7)&
     = (1+q) S(2,4) + q^2 q^{-2} (S(1,4) - S(2,4))\\
     &= q S(2,4) + S(1,4)\\
     &= \sum_{m,n\ge 0} \frac{q^{m^2+3mn+3n^2+m+4n} (1+q^{m+1})}{ (q;q)_m  
(q^3;q^3)_n}. 
\end{align*}}%

Hickerson \cite{Hick} then discovered four new identities with complex coefficients.  Here $\omega$ is a primitive third root of unity.   Two of the conjectural identities read
\begin{equation}
    S(1,1)-\omega qS(2,4)\stackrel{?}{=}\frac{(q^6;q^9)_{\infty}(\omega q,\omega^2 q^3;q^3)_{\infty}}{(q^2;q^3)_{\infty}}\label{equation:DH-conjecture1}
\end{equation}
and
\begin{equation}
    S(0,-1)+\omega^2 S(0,2)\stackrel{?}{=}\frac{\omega(q^3;q^9)_{\infty}(\omega^2 q^2,\omega q^3;q^3)_{\infty}}{(q;q^3)_{\infty}},\label{equation:DH-conjecture2}
\end{equation}
and the other two are obtain by conjugating (\ref{equation:DH-conjecture1}) and (\ref{equation:DH-conjecture2}). 

\begin{remark}
The referee has kindly pointed out that all four of the given products, along with the two asymmetric products of $I_4$ and $I_{4a}$ \cite{KR1, M1} can be written as 
\begin{equation*}
\frac{1}{(q,q^2,\omega a, \omega^2b;q^3)_{\infty}},
\end{equation*}
where $a$ and $b$ can be any elements from $\{ q,q^2,q^3\}$, as long as $ab\not \in \{q^3, q^6\}$.
\end{remark}

\section{List of finite versions for various $S(a,b;q,N)$}
We collect the necessary list of finite versions of $S(a,b;q,N)$.  Those already found in \cite{UZ} we indicate with a $(*)$, but we include them for the sake of completeness.
\begin{prop} \label{theorem:MainFiniteList} We have
{\allowdisplaybreaks \begin{align*}
S(0,-1;q,N)&=\sum\limits_{m,n\geq 0} q^{m^2+3mn+3n^2-n}
\begin{bmatrix}
N+1-3n-m\\m
\end{bmatrix}_q
\begin{bmatrix}
\lfloor\frac{2}{3}(N+2)\rfloor-m-n\\n
\end{bmatrix}_{q^3},\\
(*)S(0,0;q,N)&=\sum\limits_{m,n\geq 0} q^{m^2+3mn+3n^2}
\begin{bmatrix}
N+1-3n-m\\m
\end{bmatrix}_q
\begin{bmatrix}
\lfloor\frac{2}{3}N\rfloor+1-m-n\\n
\end{bmatrix}_{q^3},\\
S(1,1;q,N)&=\sum\limits_{m,n\geq 0} q^{m^2+3mn+3n^2+m+n}
\begin{bmatrix}
N-3n-m\\m
\end{bmatrix}_q
  \begin{bmatrix}
\lfloor\frac{2}{3}(N+1))\rfloor-m-n\\n
\end{bmatrix}_{q^3},\\
(*)S(1,2;q,N)&=\sum\limits_{m,n\geq 0} q^{m^2+3mn+3n^2+m+2n}
 \begin{bmatrix}
N-3n-m\\m
\end{bmatrix}_q
\begin{bmatrix}
\lfloor\frac{2}{3}(N-1)\rfloor+1-m-n\\n
\end{bmatrix}_{q^3},\\
(*)S(2,3;q,N)&=\sum\limits_{m,n\geq 0} q^{m^2+3mn+3n^2+2m+3n}
 \begin{bmatrix}
N-3n-m-1\\m
\end{bmatrix}_q^{\star}
\begin{bmatrix}
\lfloor\frac{2}{3}N\rfloor-m-n\\n
\end{bmatrix}_{q^3},\\
S(2,4;q,N)&=\sum\limits_{m,n\geq 0} q^{m^2+3mn+3n^2+2m+4n}
\begin{bmatrix}
N-3n-m-1\\m
\end{bmatrix}_q\\
&\qquad \cdot 
 \begin{bmatrix}
\lfloor\frac{2}{3}(N-1)\rfloor+\delta_{3\mid (N-2)}-m-n\\n
\end{bmatrix}_{q^3},\\
S(3,5;q,N)&=\sum\limits_{m,n\geq 0} q^{m^2+3mn+3n^2+3m+5n}
\begin{bmatrix}
N-3n-m-2\\m
\end{bmatrix}_q
 \begin{bmatrix}
\lfloor\frac{2}{3}(N-1)\rfloor-m-n\\n
\end{bmatrix}_{q^3},\\
S(3,6;q,N)&=\sum\limits_{m,n\geq 0} q^{m^2+3mn+3n^2+3m+6n}
\begin{bmatrix}
N-3n-m-2\\m
\end{bmatrix}_q
 \begin{bmatrix}
\lfloor\tfrac{2}{3}N\rfloor-1-m-n\\n
\end{bmatrix}_{q^3}.
\end{align*}}%
\end{prop}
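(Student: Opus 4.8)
The plan is to prove Proposition \ref{theorem:MainFiniteList} by establishing that each proposed finite double-sum $S(a,b;q,N)$ stabilizes to the correct infinite series $S(a,b;q)$ as $N\to\infty$, and — more importantly — that it satisfies the right finite recurrence in $N$ together with the correct initial conditions. Concretely, I would follow the combinatorial/partition-theoretic route of Uncu and Zudilin \cite{UZ}, which itself builds on Kur\c{s}ung\"oz \cite{Ku, Ku2}: interpret each summand $q^{m^2+3mn+3n^2+am+bn}\begin{bmatrix}\cdots\\m\end{bmatrix}_q\begin{bmatrix}\cdots\\n\end{bmatrix}_{q^3}$ as the generating function for a family of partitions obeying the Kanade--Russell-type difference conditions with largest part bounded in terms of $N$, where $m$ counts certain "small" parts and $n$ counts "large" (multiples-of-three-ish) parts. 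The bounding of the Gaussian binomials encodes exactly the cap on the largest part; letting $N\to\infty$ removes the cap and, term by term, $\begin{bmatrix}N+c-3n-m\\m\end{bmatrix}_q\to 1/(q;q)_m$ and the $q^3$-binomial $\to 1/(q^3;q^3)_n$, recovering \eqref{equation:S-def}.

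The key steps, in order, are: (1) fix notation for the relevant bounded-partition sets $\mathcal P_{a,b}(N)$ and show their generating function is the claimed double-sum, by the standard decomposition of a partition into a "staircase/Durfee-like" core (contributing the quadratic exponent $m^2+3mn+3n^2+am+bn$) plus two freely-varying pieces counted by the two Gaussian binomials; (2) verify the floor functions $\lfloor\tfrac23 N\rfloor$, $\lfloor\tfrac23(N\pm1)\rfloor$, $\lfloor\tfrac23(N\pm2)\rfloor$ and the $\delta_{3\mid(N-2)}$ correction in $S(2,4;q,N)$ by checking the three residue classes $N\equiv 0,1,2\pmod 3$ separately — this is where the arithmetic is delicate, since the multiples-of-three parts interact with $N$ through a floor; (3) confirm the boundary conventions, in particular the $\star$-convention for $S(2,3;q,N)$ already flagged in the Remark (the coefficient $\begin{bmatrix}-1\\0\end{bmatrix}_q$ is read as $1$ when $N\equiv0\pmod3$), and check that all binomials vanish automatically outside the intended range so the sums are genuinely finite; (4) take $N\to\infty$ and invoke the termwise limit of Gaussian binomials to land on $S(a,b;q)$, citing Theorem \ref{thm: prop_bin_coeff} for the polynomial degree bound that justifies interchanging limit and (finite-for-each-$N$) sum. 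For the entries marked $(*)$ one can simply cite \cite[(16)--(20)]{UZ} after a change of variables matching their $\KR_i(q;N)$ to $S(a,b;q,N)$ via \eqref{equation:SKR1}--\eqref{equation:SKR5}; e.g. $S(1,2;q,N)=\KR_4(q;N)$ directly, and $S(2,3;q,N)=\KR_3(q;N)$.

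An alternative — and for the new entries $S(0,-1)$, $S(1,1)$, $S(2,4)$, $S(3,5)$, $S(3,6)$ possibly cleaner — route is to derive them from the $(*)$ cases purely algebraically, using finite analogues of Hickerson's contiguous relations \eqref{equation:A} and \eqref{equation:B}. The relation $S(a,b)=q^{1-a}(S(a-2,b-3)-S(a-1,b-3))$ comes from multiplying by $1-q^{m+1}$ and shifting $m\mapsto m-1$; at the finite level the same manipulation on $\begin{bmatrix}M-m\\m\end{bmatrix}_q$ produces, via the Pascal-type rules (3)--(4) of Theorem \ref{thm: prop_bin_coeff}, a linear combination of $S(\cdot,\cdot;q,N)$ and $S(\cdot,\cdot;q,N\pm1)$ with an extra $\delta$-correction precisely of the type appearing in $S(2,4;q,N)$. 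Carrying this bookkeeping out for each of the five new cases, and matching against the $(*)$ cases, would give all eight formulas simultaneously; this also explains structurally why only $S(2,4;q,N)$ carries the $\delta_{3\mid(N-2)}$ term.

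The main obstacle I anticipate is item (2): the off-by-one and floor-function bookkeeping across the three residue classes $N\equiv 0,1,2\pmod 3$. Because the $n$-parts live on a $q^3$-lattice while $N$ bounds the $q$-lattice largest part, every shift of $N$ by $1$ or $2$ changes $\lfloor\tfrac23 N\rfloor$ nonuniformly, and it is exactly this mismatch that forces corrections like $\lfloor\tfrac23(N-1)\rfloor+\delta_{3\mid(N-2)}$ rather than a single clean floor. I expect to handle it by writing $N=3M+a$ with $a\in\{0,1,2\}$, reducing each claimed identity to three polynomial identities in $q$ and $M$, and then verifying each either by the combinatorial argument above or by a short induction on $M$ using Theorem \ref{thm: prop_bin_coeff}(3)--(4). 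The limits $N\to\infty$ themselves are routine once the finite identities are in hand.
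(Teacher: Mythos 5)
Your primary route --- interpreting each summand as the generating function of partitions built from a minimal configuration (the quadratic core) agitated by bounded motions of singletons and pairs, with the $(*)$ entries cited from Uncu--Zudilin --- is exactly the paper's proof, which carries out the $S(0,-1)$ case in detail and lists the minimal configurations for the remaining seven. Your alternative algebraic route via finite analogues of \eqref{equation:A}--\eqref{equation:B} is not used for this proposition but is precisely how the paper derives Proposition \ref{theorem:SecondFiniteList}; the one point your sketch glosses over is that the upper entry of the $q^3$-binomial is reduced by $m$ because each pair loses one admissible stopping position every time it crosses a singleton, which is the heart of the combinatorial bookkeeping.
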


The eight finite versions of Proposition \ref{theorem:MainFiniteList} can all be obtained by a straightforward modification of the combinatorial interpretation found in \cite{Ku, Ku2, UZ}.   One immediately notices that $S(0,0;q,N)$, $S(1,2;q,N)$, and $S(2,3;q,N)$ are the finite versions of $\KR_{1}(q)$, $\KR_{4}(q)$,  and $\KR_{3}(q)$, and that $S(0,-1;q,N)$, $S(1,1;q,N)$, and $S(2,4;q,N)$ are the finite versions of three of the functions from Hickerson's list.  Naturally, one wonders why the finite versions for $\KR_{2}(q)$ and for $S(0,2)$ from Hickerson's list are absent, and why $S(3,5;q,N)$ and $S(3,6;q,N)$ are present.  We will omit discussion of the finite version of $\KR_{5}(q)$.

The finite versions for $\KR_{2}(q)$ and for $S(0,2)$ can be obtained from Proposition \ref{theorem:MainFiniteList} by using the functional equations (\ref{equation:A}) and (\ref{equation:B}) as well as the $q$-binomial coefficient properties found in Theorem \ref{thm: prop_bin_coeff}.  This is where $S(3,5;q,N)$ and $S(3,6;q,N)$ come in.

\begin{prop} \label{theorem:SecondFiniteList} We have
\begin{align*}
(*)S(1,3;q,N)
&=\sum\limits_{m,n\geq 0} q^{m^2+3mn+3n^2+m+3n}\begin{bmatrix}
N-m-3n\\m
\end{bmatrix}_q \begin{bmatrix}
\lfloor\frac{2}{3}N\rfloor-m-n\\n
\end{bmatrix}_{q^3},\\
S(0,2;q,N)
&=\sum\limits_{m,n\geq 0} q^{m^2+3mn+3n^2+2n}\begin{bmatrix}
N+1-m-3n\\m
\end{bmatrix}_q \begin{bmatrix}
\lfloor\frac{2}{3}(N-1)\rfloor+1-m-n\\ n
\end{bmatrix}_{q^3}.
\end{align*}
\end{prop}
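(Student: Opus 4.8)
The plan is to derive the two finite versions in Proposition~\ref{theorem:SecondFiniteList} from the already-established finite versions in Proposition~\ref{theorem:MainFiniteList} by applying the contiguous relations (\ref{equation:A}) and (\ref{equation:B}) at the level of the polynomials, together with the $q$-binomial recurrences of Theorem~\ref{thm: prop_bin_coeff}. Concretely: in the infinite setting one has $\KR_2(q)=S(1,3)$ and, by running (\ref{equation:A}) once, $S(1,3)=S(1,3)$ is not directly reduced, so instead I would look for the identity among the nine basic values that expresses $S(1,3)$ and $S(0,2)$ in terms of $S(a,b)$ with $0\le a\le 2$, $0\le b\le 2$; the point is that whatever linear relation with Laurent-polynomial coefficients holds among the infinite sums must, after the substitutions $m\to m-1$ or $n\to n-1$ that produce (\ref{equation:A}) and (\ref{equation:B}), hold verbatim among the \emph{finite} polynomials provided one tracks how the upper parameters of the Gaussian binomials shift. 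So the real work is: (i) write $S(1,3;q,N)$ and $S(0,2;q,N)$ as combinations of the finite versions already listed, using the finite analogues of (\ref{equation:A})--(\ref{equation:B}); (ii) collapse those combinations back into a single double sum using parts (3) and (4) of Theorem~\ref{thm: prop_bin_coeff}.

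First I would establish the finite analogue of relation (\ref{equation:A}). Starting from $S(a-2,b-3;q,N')-S(a-1,b-3;q,N')$ for an appropriate $N'$, I expand each summand using its finite-version formula from Proposition~\ref{theorem:MainFiniteList}, factor out $q^{1-a}$ after the index shift $m\to m-1$, and check that the difference of the two $q$-binomials $\begin{bmatrix}\cdot\\m\end{bmatrix}_q-\begin{bmatrix}\cdot\\m-1\end{bmatrix}_q$ combines — via the Pascal recurrence (3) or (4) — into the single $q$-binomial appearing in $S(a,b;q,N)$, up to matching the floor expressions $\lfloor\frac23(N+c)\rfloor$ on the $q^3$-binomial. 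The analogous computation for (\ref{equation:B}) uses the $q^3$-binomial recurrence instead, with the shift $n\to n-1$. Once these finite contiguous relations are in hand with the precise parameter bookkeeping, deriving $S(1,3;q,N)$ from $S(2,3;q,N)$ (using (\ref{equation:A}) with $a=2$, $b=3$) and $S(0,2;q,N)$ from, say, $S(3,5;q,N)$ and $S(3,6;q,N)$ (which is exactly why those two were included in Proposition~\ref{theorem:MainFiniteList}) via (\ref{equation:B}) becomes a matter of substituting and simplifying. In each case I expect the two Gaussian binomials produced by the recurrence to telescope into the single binomial claimed, with the constant shifts inside the floors absorbed by the change $N\to N\pm1$ hidden in the statement.

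The main obstacle will be the floor-function bookkeeping: the upper parameters of the $q^3$-binomials in Proposition~\ref{theorem:MainFiniteList} involve $\lfloor\frac23 N\rfloor$, $\lfloor\frac23(N\pm1)\rfloor$, $\lfloor\frac23(N\pm2)\rfloor$ and occasionally a Kronecker delta $\delta_{3\mid(N-2)}$, and these do not transform uniformly — one genuinely has to split into the residue classes $N\equiv 0,1,2\pmod 3$ and verify that the identity $\lfloor\frac23(N+c)\rfloor = \lfloor\frac23(N+c')\rfloor + (\text{correction})$ holds in each case so that the claimed closed form is uniform across all $N$. A secondary subtlety is the $\star$-convention on $\begin{bmatrix}N-3n-m-1\\m\end{bmatrix}_q^{\star}$ in $S(2,3;q,N)$: when I apply the finite version of (\ref{equation:A}) to pass from $S(2,3;q,N)$ to $S(1,3;q,N)$ I must check that the degenerate term $\begin{bmatrix}-1\\0\end{bmatrix}_q=1$ (which occurs precisely when $N\equiv0\pmod3$) produces exactly the right contribution on the $S(1,3;q,N)$ side, where the binomial $\begin{bmatrix}N-m-3n\\m\end{bmatrix}_q$ has no such convention. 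Once those edge cases are dispatched, the remaining manipulations are routine applications of Theorem~\ref{thm: prop_bin_coeff}, and I would present the argument as a clean chain of equalities for each of the two formulas, relegating the residue-class case check to a short remark.
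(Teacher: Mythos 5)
Your plan is essentially the paper's proof: the paper writes $S(1,3)=S(2,3)+q^2S(3,6)$ (this is (\ref{equation:A}) with $(a,b)=(3,6)$, not $(a,b)=(2,3)$ as you state) and $S(0,2)=S(0,-1)-q^2S(3,5)$ (this is (\ref{equation:B}) with $(a,b)=(3,5)$, so $S(3,5)$ is paired with $S(0,-1)$ rather than with $S(3,6)$), and then merges the corresponding finite versions from Proposition~\ref{theorem:MainFiniteList} term by term via Theorem~\ref{thm: prop_bin_coeff}(4). Apart from those two parameter slips, your strategy --- including the observation that $S(3,5;q,N)$ and $S(3,6;q,N)$ appear in Proposition~\ref{theorem:MainFiniteList} precisely to enable this derivation --- is the same as the paper's.
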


\begin{proof}[ Proof of Proposition \ref{theorem:MainFiniteList}]
We will provide a detailed explanation of the algorithm for $S(0,-1)$, and similar formulas can be derived in the same manner. We have adapted the methods used in \cite{Ku, Ku2, UZ} to suit our specific situation.  We have also identified some typographical errors in \cite{UZ}.  

Our goal is to demonstrate that $S(0,-1;q,N)$ serves as the generating function for the number of partitions into parts less than or equal to $N$ that satisfy certain gap conditions. These conditions are analogous to \cite[(a)-(d)]{UZ} and read as follows:
\begin{enumerate}
\item[(a)]  The largest part of the partition is at most $N$.
\item[(b)]  The smallest possible value for each part is $1$.
\item[(c)]  The difference between parts that are two positions apart is at least $3$.
\item[(d)]  If consecutive parts differ by at most $1$, their sum is congruent to $2$ modulo $3$.
\end{enumerate}
It is important to note that conditions (b) and (d) may change for other versions of this algorithm.  

We will now explain the transformation of the summand:

\begin{equation}
\frac{q^{m^2+3mn+3n^2-n}}{(q;q)_m(q^3;q^3)_n}.\label{equation:inf-term}
\end{equation}

First, let us focus on interpreting and transforming:

\begin{equation*}
\frac{q^{m^2+3mn+3n^2-n}}{(q;q)_m}.
\end{equation*}
Consider the partition of $m^2+3mn+3n^2-n$ into parts:

\begin{equation*}
    \pi_{n,m}=(\underline{1,1},\underline{4,4},...,\underline{3n-2,3n-2},3n+1,3n+3,...,3n+2m-1).
\end{equation*}
Following Kur\c{s}ung\"oz \cite{Ku, Ku2}, the underlined consecutive parts of $\pi_{n,m}$ are referred to as "pairs," while the remaining terms are known as "singletons." It is worth noting that $\pi_{n,m}$ represents the minimal configuration with $2n+m$ parts and $n$ minimal gaps.  We also point out that for the congruence condition in $(d)$, there are two types of pairs satisfying the congruence condition modulo $3$, but only one of which appears in the minimal partition.  We now explain how to agitate the above minimal partition into all other partitions satisfing the gap conditions $(a)$-$(d)$.

First, we focus on the singletons, i.e. the parts that are not underlined.  We have the freedom to add any non-negative integer value $r_m$ to the largest part $3n+2m-1$, resulting in a distinct partition, where we view the singletons as a separate partition.   Similarly, after adding $r_m$ to the largest part, we can add a non-negative integer $r_{m-1}$ (where $r_{m-1} \leq r_m$) to the second largest value in $\pi_{n,m}$. By repeating this process, we can add $(r_1,\dots,r_m)$, where $0\leq r_1\leq \dots\leq r_m$. The gap conditions continue to be satisfied.   The generating function for such $(r_1, \dots, r_m)$ is given by $(q;q)_m^{-1}$.

In the finite version of this algorithm, $r_m$ must be less than or equal to $N-(3n+2m-1)$. Therefore, the generating function for partitions into at most $m$ parts is replaced by the generating function for partitions into at most $m$ parts, with each part being less than or equal to $N-(3n+2m-1)$. In a sense, we replace $(q;q)_{m}^{-1}$ in (\ref{equation:inf-term})

\begin{equation*}
     \begin{bmatrix}
N-(3n+2m-1)+m\\m
\end{bmatrix}_q.
\end{equation*}

Thus, we can conclude that 

\begin{equation*}
   q^{m^2+3mn+3n^2-n} \begin{bmatrix}
N-(3n+2m-1)+m\\m
\end{bmatrix}_q
\end{equation*}
serves as the generating function for partitions of the form:

\begin{equation*}
    \pi_{n}=(\underline{1,1},\underline{4,4},...,\underline{3n-2,3n-2},s_1,s_2,\dots,s_m)
\end{equation*}
with the following conditions:

\begin{equation*}
     s_i-s_{i-1} \geq 2 \;\text{for}\; i=2\dots m, s_1\ge 3n+1, s_m\le N.
\end{equation*}

Having completed our discussion of singletons, we focus on the pairs, i.e. the parts that are underlined.  This part of the algorithm is more difficult.  When agitating singletons, we only need to keep in mind an upper bound for the largest singleton and then stopping short of the next singleton.  For pairs we proceed in a similar fashion.  However, can also encounter a second type of pair, and we have to worry about crossing singletons.

Let us take a partition $\pi_n$ with $m$ singletons.  Let us focus on a specific pair.   We assume that there are no singletons between this pair and the next largest pair.  In this situation, there are two possible unobstructed forward motions for pairs:

\begin{equation*}
    \underline{3k-2,3k-2} \rightarrow \underline{3k-1,3k};\quad \underline{3k-1,3k} \rightarrow \underline{3k+1,3k+1}.
\end{equation*}
We note that pairs of the form $\underline{3k-1,3k}$ are new; indeed, they do not appear in the minimal configuration.

In each of the two forward motions, the total change in size of the partition is $3$.   When we moved singletons, we first moved the largest singleton, then we moved the next largest singleton, etc.   By doing so, we did not have to worry about singletons crossing each other.  For pairs we operate similarly.   However, unlike singletons, when we move a pair, we may encounter a singleton. In order to account for this situation, we define two reversible crossing-over rules for pairs over singletons:
\begin{equation*}
    \underline{3k-2,3k-2},3k+1 \rightarrow 3k-2,\underline{3k+1,3k+1},
\end{equation*}
\begin{equation*}
    \underline{3k-1,3k},3k+2 \rightarrow 3k-1,\underline{3k+2,3k+3}.
\end{equation*}
Notice that these crossing-over rules also add $3$ to the norm of the partition. The forward motion lists of the pairs relates to partitions into $\leq n$ parts, like when agitating the singletons.   However, each forward motion of a pair adds $3$ to the norm of the partition, so we instead use $(q^3;q^3)_n^{-1}$.  In \cite{UZ}, this is factor is incorrectly stated as $(q^3;q^3)_{\infty}^{-1}$.

For a finite analog, we need to adjust the reversible forward motion of the pairs. When moving a given pair, the middle point of the pair (the arithmetic mean of the elements in the pair) moves $3/2$ steps.   In \cite{UZ}, this is incorrectly stated as $2/3$ steps.  If we consider the distance from the largest part $\underline{3n-2,3n-2}$ to the upper bound $N$, there are exactly $$\lfloor\frac{2}{3}(N-(3n-2))\rfloor$$ steps.  At this point we need to be very careful, because we must keep in mind that as the pair moves forward, it can cross up to $m$ singletons.  The act of crossing over a singleton means cuts down on the number of possible steps.  The act of crossing over a singleton means that one misses a possible location where the pair can stop.  Hence, the actual number of steps a pair can move forward is $\lfloor\frac{2}{3}(N-(3n-2))\rfloor - m$.  Therefore, we need to replace the generating function $(q^3;q^3)_n^{-1}$ in (\ref{equation:inf-term}) with the $q$-binomial coefficient

\begin{equation*}
    \begin{bmatrix}
\lfloor\frac{2}{3}(N-(3n-2))\rfloor-m+n\\n
\end{bmatrix}_{q^3}
=
    \begin{bmatrix}
\lfloor\frac{2}{3}(N+2)\rfloor-m-n\\n
\end{bmatrix}_{q^3}.
\end{equation*}

The above discussion leads to the finite analog
\begin{equation*}
S(0,-1;q,N):=\sum\limits_{m,n\geq 0} q^{m^2+3mn+3n^2-n}
\begin{bmatrix}
N+1-3n-m\\m
\end{bmatrix}_q
\begin{bmatrix}
\lfloor\frac{2}{3}(N+2)\rfloor-m-n\\n
\end{bmatrix}_{q^3}.
\end{equation*}

The derivations for the other seven finite versions are entirely analogous.  For reference, we present the list of remaining seven minimal configurations:
\begin{itemize}
\item For $S(0,0)$, we consider $m^2+3mn+3n^2$ with the minimal configuration
\begin{equation*}
\pi_{n,m}=(\underline{1,2},\underline{4,5},...,\underline{3n-2,3n-1},3n+1,3n+3,...,3n+2m-1).
\end{equation*}    
\item For $S(1,1)$, we consider $m^2+3mn+3n^2+m+n$ with the minimal configuration
\begin{equation*}
\pi_{n,m}=(\underline{2,2},\underline{5,5},...,\underline{3n-1,3n-1},3n+2,3n+4,...,3n+2m).
\end{equation*} 
\item For $S(1,2)$, we consider $m^2+3mn+3n^2+m+2n$ with the minimal configuration
\begin{equation*}
\pi_{n,m}=(\underline{2,3},\underline{5,6},...,\underline{3n-1,3n},3n+2,3n+4,...,3n+2m).
\end{equation*} 
\item For $S(2,3)$, we consider $m^2+3mn+3n^2+2m+3n$ with the minimal configuration
\begin{equation*}
\pi_{n,m}=(\underline{3,3},\underline{6,6},...,\underline{3n,3n},3n+3,3n+5,...,3n+2m+1).
\end{equation*} 
\item For $S(2,4)$, we consider $m^2+3mn+3n^2+2m+4n$ with the minimal configuration
\begin{equation*}
\pi_{n,m}=(\underline{3,4},\underline{6,7},...,\underline{3n,3n+1},3n+3,3n+5,...,3n+2m+1).
\end{equation*} 
\item For $S(3,5)$, we consider $m^2+3mn+3n^2+3m+5n$ with the minimal configuration
\begin{equation*}
\pi_{n,m}=(\underline{4,4},\underline{7,7},...,\underline{3n+1,3n+1},3n+4,3n+6,...,3n+2m+2).
\end{equation*}   
\item For $S(3,6)$, we consider $m^2+3mn+3n^2+3m+6n$ with the minimal configuration
\begin{equation*}
\pi_{n,m}=(\underline{4,5},\underline{7,8},...,\underline{3n+1,3n+2},3n+4,3n+6,...,3n+2m+2).\qedhere
\end{equation*}               
\end{itemize}
\end{proof}

\begin{proof}[Proof of Proposition \ref{theorem:SecondFiniteList}]  The finite version of $S(1,3)$ can be obtained as follows.  We recall (\ref{equation:A}) in the form
\begin{equation*}
S(1,3)=S(2,3)+q^2S(3,6),
\end{equation*}
and then combine the finite versions of $S(2,3)$ and $S(3,6)$ using Theorem \ref{thm: prop_bin_coeff} (4).  The finite version of $S(0,2)$ can be obtained as follows.  We recall (\ref{equation:B}) in the form
\begin{equation*}
S(0,2)=S(0,-1)-q^2S(3,5),
\end{equation*}
and then combine the finite versions of $S(0,-1)$ and $S(3,5)$ using Theorem \ref{thm: prop_bin_coeff} (4). 
\end{proof}

\section{List of limiting cases of $q$-reflections}
In this section we will find limiting relations for the reflections of $S(1,1;q)$, $S(2,4;q)$, $S(0,-1;q)$, $S(0,2;q)$.

\begin{thm}\label{theorem:S11-limits}
We have the following limit functions for $S(1,1;q)$:
{\allowdisplaybreaks \begin{align*}
F_{1}(q):=\lim_{M\to \infty}S(1,1;q^{-1},3M+1)q^{(M+1)(3M+1)}&=\sum_{a,b\ge 0}\frac{q^{a^2-3ab+3b^2+a-b}}{(q^3;q^3)_{b}}
  \begin{bmatrix} 3b-a-1\\ a\end{bmatrix}_q,\\
F_{0}(q):=\lim_{M\to \infty}S(1,1;q^{-1},3M)q^{M(3M+1)}&=\sum_{a,b\ge 0}\frac{q^{a^2-3ab+3b^2+a-b}}{(q^3;q^3)_{b}}
  \begin{bmatrix} 3b-a\\ a\end{bmatrix}_q,\\
F_{2}(q):=\lim_{M\to \infty}S(1,1;q^{-1};3M+2)q^{(M+2)(3M+1)}
&=\sum_{a,b\ge 0}\frac{q^{a^2-3ab+3b^2+a-b-2}}{(q^3;q^3)_{b}}
  \begin{bmatrix} 3b-a-2\\ a\end{bmatrix}_q.
\end{align*}}%
\end{thm}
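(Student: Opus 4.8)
The plan is to start from the finite version
\[
S(1,1;q,N)=\sum_{m,n\ge 0} q^{m^2+3mn+3n^2+m+n}\begin{bmatrix}N-3n-m\\m\end{bmatrix}_q\begin{bmatrix}\lfloor\frac{2}{3}(N+1)\rfloor-m-n\\n\end{bmatrix}_{q^3}
\]
from Proposition \ref{theorem:MainFiniteList}, substitute $q\mapsto q^{-1}$, and use the two reflection formulas recorded in the excerpt: for a $q$-binomial, $\begin{bmatrix}n+m\\m\end{bmatrix}_{q^{-1}}=q^{-nm}\begin{bmatrix}n+m\\m\end{bmatrix}_q$, so that each $\begin{bmatrix}A\\j\end{bmatrix}_{q^{-1}}=q^{-j(A-j)}\begin{bmatrix}A\\j\end{bmatrix}_q$ (and $q^3$ in place of $q$ for the second factor). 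After collecting the exponent of $q$ coming from the explicit power $q^{-(m^2+3mn+3n^2+m+n)}$ together with $-m(N-3n-2m)$ from the first $q$-binomial and $-3n(\lfloor\frac23(N+1)\rfloor-m-2n)$ from the second, one should see that the $N$-dependent part of the exponent is linear in $m$ and $n$, of the form $-(m+3n)N$ up to the floor correction, while the $N$-free part becomes (after a little algebra, substituting $a=m+3n$... or rather keeping $m,n$ and only at the end renaming) the quadratic form $m^2-3mn+3n^2$ plus lower order terms — matching the target double-sum once we set $a=m$, $b=n$ in the cases below, or more precisely once we identify the surviving sum after the change of summation variable.

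Second, for each residue class $N=3M,3M+1,3M+2$ I would evaluate $\lfloor\frac23(N+1)\rfloor$ explicitly ($=2M$, $2M+1$, $2M+1$ respectively), multiply by the stated compensating power of $q$ ($q^{M(3M+1)}$, $q^{(M+1)(3M+1)}$, $q^{(M+2)(3M+1)}$), and check that all the $M$-dependence in the exponents cancels except inside the upper indices of the two $q$-binomials, which become $3n$-shifted linear functions of $M-m-n$ (resp. with additive constants $0$, $-1$, $-2$ matching $\begin{bmatrix}3b-a\\a\end{bmatrix}$, $\begin{bmatrix}3b-a-1\\a\end{bmatrix}$, $\begin{bmatrix}3b-a-2\\a\end{bmatrix}$). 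The key point is that as $M\to\infty$, $\begin{bmatrix}\text{(large)}+c-m-n\\n\end{bmatrix}_{q^3}\to \frac{1}{(q^3;q^3)_n}$ since the Gaussian binomial $\begin{bmatrix}L\\n\end{bmatrix}_{q^3}$ tends coefficientwise to $1/(q^3;q^3)_n$ as $L\to\infty$, giving the $(q^3;q^3)_b^{-1}$ in the denominator with $b=n$; meanwhile the first $q$-binomial $\begin{bmatrix}3n-m+c\\m\end{bmatrix}_q$ has a bounded upper index (for $n$ fixed) and survives unchanged into the limit with $a=m$. Renaming $(m,n)\mapsto(a,b)$ then yields exactly the three stated series; note in particular that $q^{m^2+3mn+3n^2+m+n}\cdot q^{-(2m^2+3mn+3n^2+\cdots)}$ must collapse to $q^{-(a^2)}\cdot q^{+3ab}\cdots$, i.e. to $q^{a^2-3ab+3b^2+a-b}$ up to the overall constant $q^0$, $q^0$, $q^{-2}$ in the three cases, which is the source of the lone $-2$ in the exponent of $F_2$.

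Third, one must justify the interchange of limit and summation: the double sum over $m,n$ (equivalently $a,b$) is, for each fixed $N$, a polynomial in $q$, and the point is that for fixed total degree in $q$ only finitely many $(m,n)$ contribute and their contribution stabilizes once $M$ is large enough. This is the standard stabilization argument used by Uncu and Zudilin and by Andrews: write the finite sum as a power series in $q$, observe that the coefficient of $q^k$ is eventually independent of $M$, hence the coefficientwise limit exists and equals the claimed series. I expect the main obstacle to be purely bookkeeping: correctly computing the floor functions $\lfloor\frac23(3M+a+1)\rfloor$ in each residue class and then verifying that the somewhat delicate cancellation of the $O(M^2)$ terms in the exponent against the normalizing power $q^{(M+\epsilon_1)(3M+\epsilon_2)}$ is exact — a sign error or an off-by-one in the floor would shift the surviving additive constant in the upper $q$-binomial index and spoil the match with $\begin{bmatrix}3b-a-1\\a\end{bmatrix}$ versus $\begin{bmatrix}3b-a\\a\end{bmatrix}$ versus $\begin{bmatrix}3b-a-2\\a\end{bmatrix}$. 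Once those three elementary but fiddly computations are done, the three displayed identities follow, and this also explains structurally why $S(1,1;q)$ produces exactly three limit functions, one per residue of $N$ modulo $3$, in parallel with the $\RK_4$ and $\RK_1$ examples recalled earlier.
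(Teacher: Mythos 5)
Your overall strategy---reflect the finite version from Proposition \ref{theorem:MainFiniteList}, normalize by the stated power of $q$, and take a coefficientwise limit---is the same as the paper's, but there is a genuine gap at the central step, namely the identification of the limiting series. You propose to take the limit with $(m,n)$ fixed and then rename $a=m$, $b=n$. That cannot work. After reflection, the exponent of $q$ in the $(m,n)$ term (for $N=3M+1$, say) is $m^2+3mn+3n^2-2m-4n-3mM-6nM$, and after multiplying by $q^{(M+1)(3M+1)}=q^{3M^2+4M+1}$ it equals $3M^2+(4-3m-6n)M+O(1)$, which tends to $+\infty$ for every fixed $(m,n)$. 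So the term-by-term limit in your indexing is $0$ in every fixed power of $q$: the low-order coefficients of the normalized polynomial come from $m,n$ growing linearly with $M$, near the top-degree corner of the sum. Relatedly, your claim that the first $q$-binomial ``has a bounded upper index'' is false as stated: the reflection leaves the binomial $\begin{bmatrix} N-3n-m\\ m\end{bmatrix}_q$ itself unchanged (it only extracts a power of $q$), and its upper index is of size $3M$.

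The missing idea is the affine change of summation variables used in the paper: for $N=3M+1$ one sets $a=3M+1-2m-3n$, $b=2M+1-m-2n$ (equivalently $m=3b-2a-1$, $n=M+1+a-2b$), a unimodular re-centering of the sum at the maximal-degree terms. Only under this substitution does the $M$-dependence in the exponent cancel exactly against $q^{(M+1)(3M+1)}$, does the cross term flip sign to produce $a^2-3ab+3b^2$, does the first binomial become the bounded $\begin{bmatrix} 3b-a-1\\ a\end{bmatrix}_q$ (via the symmetry $\begin{bmatrix}n\\m\end{bmatrix}_q=\begin{bmatrix}n\\n-m\end{bmatrix}_q$, since its lower index is $m=3b-2a-1$), and does the second binomial become $\begin{bmatrix} M+1+a-b\\ b\end{bmatrix}_{q^3}\to 1/(q^3;q^3)_b$. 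You do mention ``the change of summation variable'' once in passing, but the concrete limiting mechanism you describe (fix $m,n$, let the $q^3$-binomial blow up, keep the first binomial as is) is the wrong one and would not produce the stated right-hand sides; in particular there is no mechanism in your plan by which the upper index $N-3n-m$ could turn into $3b-a-1$. The other two residue classes require the analogous substitutions with shifted constants, and it is these shifts---not the floor function alone---that produce the $-1$, $0$, $-2$ in the upper indices and the extra $q^{-2}$ in $F_2$.
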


\begin{proof}[Proof of Theorem \ref{theorem:S11-limits}] 
We make the reflection $q\to q^{-1}$ in the finite version of $S(1,1;q)$ found in Proposition \ref{theorem:MainFiniteList}.  For the case $N=3M+1$, we can  write
{\allowdisplaybreaks \begin{align*}
S(1,1;q^{-1},3M+1)&=\sum_{n,m\ge 0}q^{m^2+3mn+3n^2-3mM-6nM-2m-4n}\\
& \qquad \cdot  \begin{bmatrix} 3M+2-m-3n\\ m\end{bmatrix}_q
 \begin{bmatrix} 2M+1 -m-n\\ n\end{bmatrix}_{q^3}.
\end{align*}}%
Changing the summation to one over $a=3M+1-2m-3n$, $b=2M+1-m-2n$ (or equivalently $m=3b-2a-1$, $n=a-2b+M+1$).  We note that the equivalence is easily seen from the invertible transformation
\begin{equation*}
\left( \begin{matrix} a\\b \end{matrix}\right)
= \left( \begin{matrix} 3M+1\\2M+1\end{matrix}\right)-\left(\begin{matrix} 2&3\\1&2\end{matrix} \right)
\left( \begin{matrix} m\\n\end{matrix}\right).
\end{equation*}
The sum transforms to
\begin{equation*}
S(1,1;q^{-1},3M+1)=\sum_{a,b\ge0}q^{a^2-3ab+3b^2+a-b-3M^2-4M-1}
  \begin{bmatrix} 3b-a-1\\ a\end{bmatrix}_q
 \begin{bmatrix} M-b+a+1\\ b\end{bmatrix}_{q^3},
\end{equation*}
where
\begin{equation*}
\lim_{M\to \infty}S(1,1;q^{-1},3M+1)q^{(M+1)(3M+1)}=\sum_{a,b\ge 0 }\frac{q^{a^2-3ab+3b^2+a-b}}{(q^3;q^3)_{b}}
 \begin{bmatrix} 3b-a-1\\ a\end{bmatrix}_q.
\end{equation*}

For the case $N=3M$, we can then obtain
\begin{align*}
S(1,1;q^{-1},3M)&=\sum_{n,m\ge 0}q^{m^2+3mn+3n^2-3mM-6nM-m-n}\\
& \ \ \ \ \ \cdot  \begin{bmatrix} 3M-m-3n\\ m\end{bmatrix}_q
 \begin{bmatrix} 2M -m-n\\ n\end{bmatrix}_{q^3}.
\end{align*}
Changing the summation to one over $a=3M-2m-3n$, $b=2M-m-2n$ (or equivalently $m=3b-2a$, $n=M+a-2b$), the sum transforms to
\begin{equation*}
S(1,1;q^{-1},3M)=\sum_{a,b\ge 0}q^{a^2-3ab+3b^2+a-b-3M^2-M}
  \begin{bmatrix} 3b-a\\ a\end{bmatrix}_q
\begin{bmatrix} M-b+a\\ b\end{bmatrix}_{q^3},
\end{equation*}
where
\begin{equation*}
\lim_{M\to \infty}S(1,1;q^{-1},3M)q^{M(3M+1)}=\sum_{a,b\ge 0}\frac{q^{a^2-3ab+3b^2+a-b}}{(q^3;q^3)_{b}}
  \begin{bmatrix} 3b-a\\ a\end{bmatrix}_q.
\end{equation*}

For the case $N=3M+2$, we then have
\begin{align*}
S(1,1;q^{-1},3M+2)&=\sum_{n,m\ge 0}q^{m^2+3mn+3n^2-3mM-6nM-3m-7n}\\
& \ \ \ \ \ \cdot \begin{bmatrix} 3M+2-m-3n\\ m\end{bmatrix}_q
 \begin{bmatrix} 2M+2 -m-n\\ n\end{bmatrix}_{q^3}.
\end{align*}
Changing the summation to one over $a=3M+2-2m-3n$, $b=2M+2-m-2n$ (or equivalently $m=3b-2a-2$, $n=M+2+a-2b$), the sum transforms to
{\allowdisplaybreaks \begin{align*}
S&(1,1;q^{-1},3M+2)\\
&=\sum_{a,b\ge 0}q^{a^2-3ab+3b^2+a-b-3M^2-7M-4}
  \begin{bmatrix} 3b-a-2\\ a\end{bmatrix}_q
 \begin{bmatrix} M+2-b+a\\ b\end{bmatrix}_{q^3},
\end{align*}}%
where
\begin{equation*}
\lim_{M\to \infty}S(1,1;q^{-1},3M+2)q^{(M+2)(3M+1)}
=\sum_{a,b\ge 0}\frac{q^{a^2-3ab+3b^2+a-b-2}}{(q^3;q^3)_{b}}
  \begin{bmatrix} 3b-a-2\\ a\end{bmatrix}_q.\qedhere
\end{equation*}

\end{proof} 

\begin{thm}\label{theorem:S24-limits}
We have the following limit functions for $S(2,4;q)$:
{\allowdisplaybreaks \begin{align*}
G_{1}(q):=\lim_{M\to \infty}S(2,4;q^{-1},3M+1)q^{(M+1)(3M+1)}&=\sum_{a,b\ge 0}\frac{q^{a^2-3ab+3b^2+2b+1}}{(q^3;q^3)_{b}}
  \begin{bmatrix} 3b-a\\ a\end{bmatrix}_q,\\
G_{0}(q):=\lim_{M\to \infty}S(2,4;q^{-1},3M)q^{M(3M+1)}&=\sum_{a,b\ge 0}\frac{q^{a^2-3ab+3b^2+2b+1}}{(q^3;q^3)_{b}}
  \begin{bmatrix} 3b-a+1\\ a\end{bmatrix}_q,\\
G_{2}(q):=\lim_{M\to \infty}S(2,4;q^{-1},3M+2)q^{(M+2)(3M+1)}
&=\sum_{a,b\ge 0}\frac{q^{a^2-3ab+3b^2+2b-1}}{(q^3;q^3)_{b}}
  \begin{bmatrix} 3b-a-1\\ a\end{bmatrix}_q.
\end{align*}}%
\end{thm}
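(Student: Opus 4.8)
The plan is to mimic exactly the proof of Theorem \ref{theorem:S11-limits}, since the structure of $S(2,4;q,N)$ given in Proposition \ref{theorem:MainFiniteList} is of the same shape, the only genuine new wrinkle being the Kronecker-delta $\delta_{3\mid(N-2)}$ appearing in the $q^3$-binomial coefficient. First I would substitute $q\to q^{-1}$ in the finite version of $S(2,4;q,N)$, using the two transformation rules $(a;\rho)_n=(a^{-1};q)_n(-a)^n\rho^{\binom n2}$ and $\begin{bmatrix}n+m\\m\end{bmatrix}_{q^{-1}}=q^{-nm}\begin{bmatrix}n+m\\m\end{bmatrix}_q$ (applied with $q$ replaced by $q^3$ for the second factor). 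This produces, in each residue class $N=3M+a$ with $a\in\{0,1,2\}$, a sum of the form $\sum_{n,m\ge0}q^{(\text{quadratic in }m,n,M)}\begin{bmatrix}\cdots\\m\end{bmatrix}_q\begin{bmatrix}\cdots\\n\end{bmatrix}_{q^3}$, where the upper entry of the first $q$-binomial is $3M+(\text{const})-m-3n$ and of the second is $2M+(\text{const})-m-n$.

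Next I would perform the linear change of summation variables. Following the model computation, set $a=N-2m-3n$ and $b=\lfloor\frac23(N-1)\rfloor+\delta-m-2n$ adjusted appropriately for each residue class; equivalently invert the matrix $\begin{pmatrix}2&3\\1&2\end{pmatrix}$ as in the excerpt, writing $m=3b-2a+(\text{const})$, $n=M+a-2b+(\text{const})$. Under this substitution the quadratic exponent $m^2+3mn+3n^2+2m+4n$ (shifted by the $q^{-1}$ corrections $-3mM-6nM$ plus the $\binom n2$-type terms from the Pochhammer) collapses to $a^2-3ab+3b^2+2b+c_a-e_aM^2-f_aM$ for explicit constants; the $m$-dependent $q$-binomial becomes $\begin{bmatrix}3b-a+\epsilon_a\\a\end{bmatrix}_q$ with $\epsilon_a\in\{-1,0,1\}$, and the $n$-dependent one becomes $\begin{bmatrix}M+a-b+\cdots\\b\end{bmatrix}_{q^3}$. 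Multiplying through by $q^{e_aM^2+f_aM}$ (i.e.\ by $q^{(M+1)(3M+1)}$, $q^{M(3M+1)}$, $q^{(M+2)(3M+1)}$ in the three cases) cancels the $M$-dependent powers, and letting $M\to\infty$ uses $\lim_{M\to\infty}\begin{bmatrix}M+a-b+\cdots\\b\end{bmatrix}_{q^3}=\dfrac1{(q^3;q^3)_b}$ since the upper index tends to $+\infty$ for fixed $b$.

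The main obstacle, and the place to be careful, is the $\delta_{3\mid(N-2)}$ term. It is nonzero precisely when $N\equiv 2\pmod3$, i.e.\ exactly in the case $N=3M+2$, and it shifts the upper entry of the $q^3$-binomial by $1$; I need to verify that after the change of variables this merely replaces $M-b+\cdots$ by $M-b+\cdots+1$, which is harmless in the $M\to\infty$ limit (the limit is still $1/(q^3;q^3)_b$) but must be tracked correctly so that the finite-$M$ identities before taking limits are literally true. A secondary nuisance is bookkeeping: one must check that the ranges $a,b\ge0$ are the correct image of $n,m\ge0$ under the affine change of variables (for finite $M$ there are boundary truncations, but these disappear in the limit because the $q^3$-binomial vanishes outside the allowed range), and that the three exponent normalizations $q^{(M+1)(3M+1)}$, $q^{M(3M+1)}$, $q^{(M+2)(3M+1)}$ are exactly the ones that clear the negative powers of $q$ arising from the reflection. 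Apart from these, the computation is a direct transcription of the proof of Theorem \ref{theorem:S11-limits}, with $m^2+3mn+3n^2+m+n$ replaced by $m^2+3mn+3n^2+2m+4n$ and the binomial upper-entry constants shifted accordingly, yielding the three stated formulas for $G_1(q)$, $G_0(q)$, $G_2(q)$.
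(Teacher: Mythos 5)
Your plan is exactly the paper's proof: reflect $q\to q^{-1}$ in the finite version of $S(2,4;q,N)$ from Proposition \ref{theorem:MainFiniteList}, invert the matrix $\left(\begin{smallmatrix}2&3\\1&2\end{smallmatrix}\right)$ to change to the variables $a,b$ (with the residue-class-dependent constant shifts), normalize by the stated powers of $q$, and let $M\to\infty$ so the $q^3$-binomial tends to $1/(q^3;q^3)_b$. Your reading of the $\delta_{3\mid(N-2)}$ term is also correct --- it contributes only in the $N=3M+2$ case, where it shifts the upper entry of the $q^3$-binomial to $2M+1-m-n$ --- so the proposal matches the paper's argument in both structure and detail.
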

\begin{proof}[Proof of Theorem \ref{theorem:S24-limits}]
We make the reflection $q\to q^{-1}$ in the finite version of $S(2,4;q)$ found in Proposition \ref{theorem:MainFiniteList}.  For the case $N=3M+1$, we can then write
{\allowdisplaybreaks \begin{align*}
S(2,4;q^{-1},3M+1)&=\sum_{n,m\ge 0}q^{m^2+3mn+3n^2-2m-4n-3M(m+2n)}\\
& \ \ \ \ \ \cdot  \begin{bmatrix} 3M-m-3n\\ m\end{bmatrix}_q
 \begin{bmatrix} 2M -m-n\\ n\end{bmatrix}_{q^3}.\notag
\end{align*}}%
Changing the summation to one over $a=3M-2m-3n$, $b=2M-m-2n$ (or equivalently $m=3b-2a$, $n=M+a-2b$), the sum transforms to
\begin{equation*}
S(2,4;q^{-1},3M+1)=\sum_{a,b\ge 0}q^{a^2-3ab+3b^2+2b-3M^2-4M}
 \begin{bmatrix} 3b-a\\ a\end{bmatrix}_q
 \begin{bmatrix} M-b+a\\ b\end{bmatrix}_{q^3},
\end{equation*}
where
\begin{equation*}
\lim_{M\to \infty}S(2,4;q^{-1},3M+1)q^{(M+1)(3M+1)}=\sum_{a,b\ge 0}\frac{q^{a^2-3ab+3b^2+2b+1}}{(q^3;q^3)_{b}}
  \begin{bmatrix} 3b-a\\ a\end{bmatrix}_q.
\end{equation*}

For the case $N=3M$, we then obtain
{\allowdisplaybreaks \begin{align*}
S(2,4;q^{-1},3M)&=\sum_{n,m\ge 0}q^{m^2+3mn+3n^2-m-n-3mM-6nM}\\
& \ \ \ \ \ \cdot \begin{bmatrix} 3M-m-3n-1\\ m\end{bmatrix}_q
\begin{bmatrix} 2M-1 -m-n\\ n\end{bmatrix}_{q^3}.
\end{align*}}%
Changing the summation to one over $a=3M-2m-3n-1$, $b=2M-m-2n-1$ (or equivalently $m=3b-2a+1$, $n=a-2b+M-1$), the sum transforms to
\begin{equation*}
S(2,4;q^{-1},3M)=\sum_{a,b\ge 0}q^{a^2-3ab+3b^2+2b+1-3M^2-M}
  \begin{bmatrix} 3b-a+1\\ a\end{bmatrix}_q
 \begin{bmatrix} M-b+a-1\\ b\end{bmatrix}_{q^3},
\end{equation*}
where
\begin{equation*}
\lim_{M\to \infty}S(2,4;q^{-1},3M)q^{M(3M+1)}=\sum_{a,b\ge 0}\frac{q^{a^2-3ab+3b^2+2b+1}}{(q^3;q^3)_{b}}
  \begin{bmatrix} 3b-a+1\\ a\end{bmatrix}_q.
\end{equation*}

For the case $N=3M+2$,  we then have
\begin{align*}
S(2,4;q^{-1},3M+2)&=\sum_{n,m\ge 0}q^{m^2+3mn+3n^2-m-4n-m(3M+2)-3n 2M}\\
& \ \ \ \ \ \cdot  \begin{bmatrix} 3M+1-m-3n\\ m\end{bmatrix}_q
\begin{bmatrix} 2M +1-m-n\\ n\end{bmatrix}_{q^3}.
\end{align*}
Changing the summation to one over $a=3M+1-2m-3n$, $b=2M+1-m-2n$ (or equivalently $m=3b-2a-1$, $n=M+1+a-2b$), the sum transforms to
\begin{align*}
S&(2,4;q^{-1},3M+2)\\
&=\sum_{a,b\ge 0}q^{a^2-3ab+3b^2+2b-3M^2-7M-3}
 \begin{bmatrix} 3b-a-1\\ a\end{bmatrix}_q
\begin{bmatrix} M-b+a+1\\ b\end{bmatrix}_{q^3},
\end{align*}
where
\begin{equation*}
\lim_{M\to \infty}S(2,4;q^{-1},3M+2)q^{(M+2)(3M+1)}
=\sum_{a,b\ge 0}\frac{q^{a^2-3ab+3b^2+2b-1}}{(q^3;q^3)_{b}}
  \begin{bmatrix} 3b-a-1\\ a\end{bmatrix}_q.\qedhere
\end{equation*}

\end{proof}

\begin{thm}\label{theorem:S0-1-limits}
We have the following limit functions for $S(0,-1;q)$
{\allowdisplaybreaks \begin{align*}
F_{1}^{\star}(q):=\lim_{M\to \infty}S(0,-1;q^{-1},3M+1)q^{(M+1)(3M+2)}
&=\sum_{a,b\ge 0}\frac{q^{a^2-3ab+3b^2+a-2b}}{(q^3;q^3)_{b}}
  \begin{bmatrix} 3b-a-2\\ a\end{bmatrix}_q,\\
F_{0}^{\star}(q):=\lim_{M\to \infty}S(0,-1;q^{-1},3M)q^{M(3M+2)}&=\sum_{a, b\ge 0}\frac{q^{a^2-3ab+3b^2+a-2b}}{(q^3;q^3)_{b}}
  \begin{bmatrix} 3b-a-1\\ a\end{bmatrix}_q,\\
F_{2}^{\star}(q):=\lim_{M\to \infty}S(0,-1;q^{-1},3M+2)q^{(M+1)(3M+2)}
&=\sum_{a,b\ge 0}\frac{q^{a^2-3ab+3b^2+a-2b}}{(q^3;q^3)_{b}}
  \begin{bmatrix} 3b-a\\ a\end{bmatrix}_q.
\end{align*}}%
\end{thm}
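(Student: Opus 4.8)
The plan is to follow the exact template used in the proofs of Theorems \ref{theorem:S11-limits} and \ref{theorem:S24-limits}: start from the finite version $S(0,-1;q,N)$ given in Proposition \ref{theorem:MainFiniteList}, substitute $q\to q^{-1}$, and then perform the linear change of summation variables that sends $(m,n)$ to a pair $(a,b)$ which stays bounded as $M\to\infty$. Concretely, for each residue $N=3M, 3M+1, 3M+2$ I would first apply the two transformation rules recalled before Warnaar's conjectures, namely $(a;q^{-1})_n = (a^{-1};q)_n(-a)^n q^{-\binom{n}{2}}$ applied to $(q^3;q^3)$-type factors inside the $q$-binomials, together with $\begin{bmatrix}n+m\\m\end{bmatrix}_{q^{-1}} = q^{-nm}\begin{bmatrix}n+m\\m\end{bmatrix}_q$, to rewrite $S(0,-1;q^{-1},N)$ as a sum of a monomial in $q$ (with a quadratic exponent in $m,n,M$) times two ordinary ($q$- and $q^3$-) Gaussian binomials. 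This is the routine but bookkeeping-heavy step, and one has to be careful with the floor functions $\lfloor\tfrac{2}{3}(N+2)\rfloor$, which evaluate to $2M+1$, $2M+2$, $2M+2$ respectively in the three cases.

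Next, for $N=3M$ I would set $a = 3M - 2m - 3n$ and $b = 2M - m - 2n$, exactly the substitution used for the $N=3M$ case of $S(1,1)$; the inverse is $m=3b-2a$, $n=M+a-2b$, coming from the same invertible integer matrix $\left(\begin{smallmatrix}2&3\\1&2\end{smallmatrix}\right)$ with $q$-binomials $\begin{bmatrix}3b-a-1\\a\end{bmatrix}_q$ and $\begin{bmatrix}M-b+a\\b\end{bmatrix}_{q^3}$ after simplification. For $N=3M+1$ the shift in the top entry of the first binomial ($N+1-3n-m = 3M+2-3n-m$) and in the floor ($2M+1$) should produce $a=3M+1-2m-3n$, $b=2M+1-m-2n$ and the binomials $\begin{bmatrix}3b-a-2\\a\end{bmatrix}_q$, $\begin{bmatrix}M-b+a+1\\b\end{bmatrix}_{q^3}$. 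For $N=3M+2$ one gets the floor value $2M+2$ as well, leading (after the analogous substitution $a=3M+2-2m-3n$, $b=2M+2-m-2n$) to $\begin{bmatrix}3b-a\\a\end{bmatrix}_q$ and a $q^3$-binomial with top entry $M-b+a+1$ or $M-b+a+2$. In each case the exponent of $q$ splits as $\big(a^2-3ab+3b^2+a-2b\big) + c(M)$ where $c(M)$ is a quadratic polynomial in $M$ independent of $a,b$; multiplying by $q^{-c(M)}$ — which is precisely the normalizing power $q^{(M+1)(3M+2)}$ or $q^{M(3M+2)}$ in the statement — and letting $M\to\infty$ replaces the $q^3$-Gaussian binomial $\begin{bmatrix}M-b+a+\varepsilon\\b\end{bmatrix}_{q^3}$ by $1/(q^3;q^3)_b$ (using Theorem \ref{thm: prop_bin_coeff}(1) and the standard fact that $\begin{bmatrix}K\\b\end{bmatrix}_{q^3}\to 1/(q^3;q^3)_b$ as $K\to\infty$), yielding the three claimed series.

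The one point that requires genuine care — and which I expect to be the main obstacle — is verifying that the normalizing power of $q$ is identical in the $N=3M+1$ and $N=3M+2$ cases (the statement writes $q^{(M+1)(3M+2)}$ for both $F_1^\star$ and $F_2^\star$), unlike in Theorems \ref{theorem:S11-limits} and \ref{theorem:S24-limits} where the three powers are genuinely different. This forces one to track the constant $c(M)$ very precisely through the $q\to q^{-1}$ substitution; a sign error or an off-by-one in the floor function would change $c(M)$ by a linear term in $M$ and spoil the match. I would double-check this by expanding both sides as power series in $q$ to, say, order $q^{20}$ for small $M$ after normalization, to confirm the common exponent and the limiting forms. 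The remaining verifications — that the substitution is a bijection between $\{m,n\ge 0\}$ (restricted by the support of the binomials) and the relevant lattice cone in $(a,b)$, and that the $q$-binomial $\begin{bmatrix}M-b+a+\varepsilon\\b\end{bmatrix}_{q^3}$ indeed has nonnegative top entry for all terms contributing in the limit — are routine and handled exactly as in the preceding two theorems.
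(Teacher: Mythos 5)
Your overall strategy is exactly the paper's: reflect the finite version of $S(0,-1;q,N)$ from Proposition \ref{theorem:MainFiniteList}, pass to new summation variables $(a,b)$, and let $M\to\infty$ so that the $q^3$-binomial degenerates to $1/(q^3;q^3)_b$. Your evaluation of the floor values ($2M+1$, $2M+2$, $2M+2$ for $N=3M,3M+1,3M+2$) and your observation that the normalizing powers for $N=3M+1$ and $N=3M+2$ coincide at $q^{(M+1)(3M+2)}$ are both correct and are indeed the delicate points.

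However, the substitutions you actually write down are the ones tailored to $S(1,1;q,N)$, whose first binomial has top entry $N-3n-m$, not to $S(0,-1;q,N)$, whose first binomial has top entry $N+1-3n-m$. The correct choice is $a=(\text{top of first binomial})-m=(N+1)-2m-3n$ and $b=(\text{top of second binomial})-n=\lfloor\tfrac{2}{3}(N+2)\rfloor-m-2n$; concretely $a=3M+1-2m-3n$, $b=2M+1-m-2n$ for $N=3M$; $a=3M+2-2m-3n$, $b=2M+2-m-2n$ for $N=3M+1$; and $a=3M+3-2m-3n$, $b=2M+2-m-2n$ for $N=3M+2$. With your choice in the $N=3M$ case ($a=3M-2m-3n$, $b=2M-m-2n$, so $m=3b-2a$, $n=M+a-2b$) the first binomial becomes
\begin{equation*}
\begin{bmatrix} 3M+1-m-3n \\ m \end{bmatrix}_q=\begin{bmatrix} 3b-a+1 \\ 3b-2a \end{bmatrix}_q=\begin{bmatrix} 3b-a+1 \\ a+1 \end{bmatrix}_q,
\end{equation*}
not the claimed $\begin{bmatrix} 3b-a-1 \\ a \end{bmatrix}_q$, and the same off-by-one occurs in the other two cases (e.g.\ for $N=3M+1$ your substitution yields $\begin{bmatrix} 3b-a \\ a+1 \end{bmatrix}_q$ rather than $\begin{bmatrix} 3b-a-2 \\ a \end{bmatrix}_q$). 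So as written the computation does not land on the stated right-hand sides; this is exactly the bookkeeping hazard you flagged, and it is repaired by shifting the affine part of the change of variables as above, after which the exponent splits as $a^2-3ab+3b^2+a-2b+c(M)$ with the stated $c(M)$ and the argument concludes exactly as in Theorems \ref{theorem:S11-limits} and \ref{theorem:S24-limits}.
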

\begin{proof}[Proof of Theorem \ref{theorem:S0-1-limits}]
We make the reflection $q\to q^{-1}$ in the finite version of $S(0,-1;q)$ found in Proposition \ref{theorem:MainFiniteList}.  For the case $N=3M+1$, we obtain
\begin{align*}
S(0,-1;q^{-1},3M+1)&=\sum_{n,m\ge 0}q^{m^2+3mn+3n^2-2m-5n-6nM-3mM}\\
& \ \ \ \ \ \cdot \begin{bmatrix} 3M+2-m-3n\\ m\end{bmatrix}_q
 \begin{bmatrix} 2M+2 -m-n\\ n\end{bmatrix}_{q^3}.\notag
\end{align*}
Changing the summation to one over $a=3M+2-2m-3n$, $b=2M+2-m-2n$ (or equivalently $m=3b-2a-2$, $n=M+2+a-2b$), the sum transforms to
\begin{align*}
S&(0,-1;q^{-1},3M+1)\\
&=\sum_{a,b\ge 0}q^{a^2-3ab+3b^2+a-2b-2-3M^2-5M}
  \begin{bmatrix} 3b-a-2\\ a\end{bmatrix}_q
 \begin{bmatrix} M+2-b+a\\ b\end{bmatrix}_{q^3},
\end{align*}
where
\begin{equation*}
\lim_{M\to \infty}S(0,-1;q^{-1},3M+1)q^{(M+1)(3M+2)}
=\sum_{a,b\ge 0}\frac{q^{a^2-3ab+3b^2+a-2b}}{(q^3;q^3)_{b}}
 \begin{bmatrix} 3b-a-2\\ a\end{bmatrix}_q.
\end{equation*}

For the case $N=3M$, we have
\begin{align*}
S(0,-1;q^{-1},3M)&=\sum_{n,m\ge 0}q^{m^2+3mn+3n^2+7n-6nM-3Mm}\\
& \ \ \ \ \ \cdot  \begin{bmatrix} 3M+1-m-3n\\ m\end{bmatrix}_q
 \begin{bmatrix} 2M+1-m-n\\ n\end{bmatrix}_{q^3}.
\end{align*}
Changing the summation to one over $a=3M+1-2m-3n$, $b=2M+1-m-2n$ (or equivalently $m=3b-2a-1$, $n=M+1+a-2b$), the sum transforms to
\begin{equation*}
S(0,-1;q^{-1},3M)=\sum_{a, b\ge 0}q^{a^2-3ab+3b^2+a-2b-3M^2-2M}
  \begin{bmatrix} 3b-a-1\\ a\end{bmatrix}_q
\begin{bmatrix} M-1+a-b\\ b\end{bmatrix}_{q^3},
\end{equation*}
where
\begin{equation*}
\lim_{M\to \infty}S(0,-1;q^{-1},3M)q^{M(3M+2)}=\sum_{a, b\ge 0}\frac{q^{a^2-3ab+3b^2+a-2b}}{(q^3;q^3)_{b}}
 \begin{bmatrix} 3b-a-1\\ a\end{bmatrix}_q.
\end{equation*}

For the case $N=3M+2$, we have
{\allowdisplaybreaks \begin{align*}
S(0,-1;q^{-1},3M+2)&=\sum_{n,m\ge 0}q^{m^2+3mn+3n^2-3m-5n-3mM-6nM}\\
& \qquad \cdot  \begin{bmatrix} 3M+3-m-3n\\ m\end{bmatrix}_q
 \begin{bmatrix} 2M +2-m-n\\ n\end{bmatrix}_{q^3}.
\end{align*}}%
Changing the summation to one over $a=3M+3-2m-3n$, $b=2M+2-m-2n$ (or equivalently $m=3b-2a$, $n=M+1+a-2b$), the sum transforms to
\begin{equation*}
S(0,-1;q^{-1},3M+2)
=\sum_{a,b\ge 0}q^{a^2-3ab+3b^2+a-2b-2-3M^2-5M}
  \begin{bmatrix} 3b-a\\ a\end{bmatrix}_q
 \begin{bmatrix} M+1-b+a\\ b\end{bmatrix}_{q^3},
\end{equation*}
where
\begin{equation*}
\lim_{M\to \infty}S(0,-1;q^{-1},3M+2)q^{(M+1)(3M+2)}=\sum_{a,b\ge 0}\frac{q^{a^2-3ab+3b^2+a-2b}}{(q^3;q^3)_{b}}
  \begin{bmatrix} 3b-a\\ a\end{bmatrix}_q.\qedhere
\end{equation*}
\end{proof}

\begin{thm}\label{theorem:S02-limits}
We have the following limit functions for $S(0,2;q)$
{\allowdisplaybreaks \begin{align*}
G_{1}^{\star}:=(q)\lim_{M\to \infty}S(0,2;q^{-1},3M+1)q^{(M+1)(3M+2)}
&=\sum_{a, b\ge 0}\frac{q^{a^2-3ab+3b^2-2a+4b+1}}{(q^3;q^3)_{b}}
  \begin{bmatrix} 3b-a+1\\ a\end{bmatrix}_q,\\
G_{0}^{\star}(q):=\lim_{M\to \infty}S(0,2;q^{-1},3M)q^{M(3M+2)}&=\sum_{a,b\ge 0}\frac{q^{a^2-3ab+3b^2-2a+4b+1}}{(q^3;q^3)_{b}}
  \begin{bmatrix} 3b-a+2\\ a\end{bmatrix}_q,\\
G_{2}^{\star}(q):=\lim_{M\to \infty}S(0,2;q^{-1},3M+2)q^{(M+1)(3M+2)}
&=\sum_{a,b\ge 0}\frac{q^{a^2-3ab+3b^2-2a+4b+1}}{(q^3;q^3)_{b}}
  \begin{bmatrix} 3b-a+3\\ a\end{bmatrix}_q.\notag
\end{align*}}%
\end{thm}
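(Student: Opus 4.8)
The plan is to run, for $S(0,2;q)$, the same procedure used to prove Theorems \ref{theorem:S11-limits}, \ref{theorem:S24-limits}, and \ref{theorem:S0-1-limits}, starting from the finite version of $S(0,2;q,N)$ recorded in Proposition \ref{theorem:SecondFiniteList}. First I would split into the three residue classes $N=3M$, $N=3M+1$, $N=3M+2$ and evaluate the floor $\lfloor\frac{2}{3}(N-1)\rfloor$, which equals $2M-1$, $2M$, $2M$ respectively; this turns the finite sum into an explicit double sum over $m,n\ge 0$ whose summand is $q^{m^2+3mn+3n^2+2n}$ times $\begin{bmatrix} 3M+c_1-m-3n\\ m\end{bmatrix}_q\begin{bmatrix} 2M+c_2-m-n\\ n\end{bmatrix}_{q^3}$ for explicit small integers $c_1,c_2$ depending on the residue.

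Next I would apply $q\to q^{-1}$. Using the reflection formula $\begin{bmatrix} n+m\\ m\end{bmatrix}_{q^{-1}}=q^{-nm}\begin{bmatrix} n+m\\ m\end{bmatrix}_q$ on the two $q$-binomials (the second with base $q^3$) and $q^{m^2+3mn+3n^2+2n}\mapsto q^{-(m^2+3mn+3n^2+2n)}$ on the prefactor, I would collect the total power of $q$, which is a quadratic in $m,n$ whose coefficients are linear in $M$. Then comes the change of summation variables $a=3M+c_1-2m-3n$, $b=2M+c_2-m-2n$ — the same unimodular substitution (matrix $\bigl(\begin{smallmatrix}2&3\\1&2\end{smallmatrix}\bigr)$) used in the earlier proofs — which is inverted by $m=3b-2a+c_1'$, $n=M+a-2b+c_2'$ for suitable constants. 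Under it the first $q$-binomial becomes $\begin{bmatrix} 3b-a+c_1'\\ a\end{bmatrix}_q$, the second becomes $\begin{bmatrix} M+a-b+c_2'\\ b\end{bmatrix}_{q^3}$, and the quadratic part of the exponent collapses to $a^2-3ab+3b^2$ plus terms linear in $a,b$ and an $M$-dependent constant of the shape $-3M^2-\alpha M-\beta$.

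Finally I would multiply by the indicated normalizing power of $q$ — namely $q^{(M+1)(3M+2)}=q^{3M^2+5M+2}$ when $N=3M+1$ or $N=3M+2$, and $q^{M(3M+2)}=q^{3M^2+2M}$ when $N=3M$ — which is chosen precisely to kill the $M^2$- and $M$-parts of the exponent, and let $M\to\infty$: for fixed $a,b\ge 0$ one has $\lim_{M\to\infty}\begin{bmatrix} M+a-b+c_2'\\ b\end{bmatrix}_{q^3}=\frac{1}{(q^3;q^3)_b}$, while summands with $a<0$ or $b<0$ vanish in the limit, so the triple of closed forms $G_1^\star(q),G_0^\star(q),G_2^\star(q)$ drops out. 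The main obstacle is entirely bookkeeping: keeping the small constant shifts straight through the three residue cases — the $+1$, $+2$, $+3$ in the top entries of the $q$-binomials, and the constant $+1$ in the exponent $a^2-3ab+3b^2-2a+4b+1$ — and checking that the chosen power of $q$ really does clear all $M$-dependence, which is why the normalization in the case $N=3M$ differs from that in the cases $N=3M+1,3M+2$. A useful consistency check is to instead reflect the relation $S(0,2)=S(0,-1)-q^2S(3,5)$ at the finite level (as in the proof of Proposition \ref{theorem:SecondFiniteList}, where the two finite versions are combined by Theorem \ref{thm: prop_bin_coeff}), thereby deducing the present limits from those in Theorem \ref{theorem:S0-1-limits} plus the analogous routine computation for $S(3,5;q)$.
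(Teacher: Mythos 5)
Your proposal follows essentially the same route as the paper's own proof: reflect the finite version from Proposition \ref{theorem:SecondFiniteList} case by case in $N \bmod 3$, apply the unimodular change of variables $a=3M+c_1-2m-3n$, $b=2M+c_2-m-2n$, normalize by the stated power of $q$, and let $M\to\infty$ so that the $q^3$-binomial tends to $1/(q^3;q^3)_b$; your evaluations of the floors and of the normalizing exponents all agree with the paper's computation. The only difference is your closing remark about instead reflecting $S(0,2)=S(0,-1)-q^2S(3,5)$ at the finite level, which the paper does not pursue but which would serve as a reasonable independent check.
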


\begin{proof}[Proof of Theorem \ref{theorem:S02-limits}]
We make the reflection $q\to q^{-1}$ in the finite version of $S(0,2;q)$ found in Proposition \ref{theorem:SecondFiniteList}.  For the case $N=3M+1$, we then obtain
{\allowdisplaybreaks \begin{align*}
S(0,2;q^{-1},3M+1)&=\sum_{n,m\ge 0}q^{m^2+3mn+3n^2-2m-5n-3mM-6nM)}\\
& \qquad \cdot  \begin{bmatrix} 3M+2-m-3n\\ m\end{bmatrix}_q
 \begin{bmatrix} 2M+1 -m-n\\ n\end{bmatrix}_{q^3}.
\end{align*}}%
Changing the summation to one over $a=3M+2-2m-3n$, $b=2M+1-m-2n$ (or equivalently $m=1+3b-2a$, $n=M+a-2b$), the sum transforms to
\begin{equation*}
S(0,2;q^{-1},3M+1)=\sum_{a, b\ge0 }q^{a^2-3ab+3b^2-2a+4b-3M^2-5M-1}
  \begin{bmatrix} 3b-a+1\\ a\end{bmatrix}_q
 \begin{bmatrix} M-b+a\\ b\end{bmatrix}_{q^3},
\end{equation*}
where
\begin{equation*}
\lim_{M\to \infty}S(0,2;q^{-1},3M+1)q^{(M+1)(3M+2)}
=\sum_{a, b\ge 0}\frac{q^{a^2-3ab+3b^2-2a+4b+1}}{(q^3;q^3)_{b}}
  \begin{bmatrix} 3b-a+1\\ a\end{bmatrix}_q.
\end{equation*}

For the case $N=3M$, we have
{\allowdisplaybreaks \begin{align*}
S(0,2;q^{-1},3M)&=\sum_{n,m\ge 0}q^{m^2+3mn+3n^2-m-2n-3mM-6nM}\\
& \qquad \cdot  \begin{bmatrix} 3M+1-m-3n\\ m\end{bmatrix}_q
 \begin{bmatrix} 2M -m-n\\ n\end{bmatrix}_{q^3}.
\end{align*}}%
Changing the summation to one over $a=3M+1-2m-3n$, $b=2M-m-2n$ (or equivalently $m=2+3b-2a$, $n=M-1+a-2b$), the sum transforms to
\begin{equation*}
S(0,2;q^{-1},3M)=\sum_{a,b\ge 0}q^{a^2-3ab+3b^2-2a+4b+1-3M^2-2M}
  \begin{bmatrix} 3b-a+2\\ a\end{bmatrix}_q
 \begin{bmatrix} M-1-b+a\\ b\end{bmatrix}_{q^3},
\end{equation*}
where
\begin{equation*}
\lim_{M\to \infty}S(0,2;q^{-1},3M)q^{M(3M+2)}=\sum_{a,b\ge 0}\frac{q^{a^2-3ab+3b^2-2a+4b+1}}{(q^3;q^3)_{b}}
  \begin{bmatrix} 3b-a+2\\ a\end{bmatrix}_q.
\end{equation*}

For the case $N=3M+2$, we arrive at
{\allowdisplaybreaks \begin{align*}
S(0,2;q^{-1},3M+2)&=\sum_{n,m\ge 0}q^{m^2+3mn+3n^2-3m-5n-3mM-6nM}\\
& \ \ \ \ \ \cdot \begin{bmatrix} 3M+3-m-3n\\ m\end{bmatrix}_q
\begin{bmatrix} 2M+1 -m-n\\ n\end{bmatrix}_{q^3}.
\end{align*}}%
Changing the summation to one over $a=3M+3-2m-3n$, $b=2M+1-m-2n$ (or equivalently $m=3+3b-2a$, $n=M-1+a-2b$), the sum transforms to
\begin{equation*}
S(0,2;q^{-1},3M+2)=\sum_{a,b\ge 0}q^{a^2-3ab+3b^2-2a+4b-3M^2-5M-1}
  \begin{bmatrix} 3b-a+3\\ a\end{bmatrix}_q
\begin{bmatrix} M-1-b+a\\ b\end{bmatrix}_{q^3},
\end{equation*}
where
\begin{equation*}
\lim_{M\to \infty}S(0,2;q^{-1},3M+2)q^{(M+1)(3M+2)}
=\sum_{a,b\ge 0}\frac{q^{a^2-3ab+3b^2-2a+4b+1}}{(q^3;q^3)_{b}}
  \begin{bmatrix} 3b-a+3\\ a\end{bmatrix}_q.\qedhere
\end{equation*}
\end{proof}

\section{List of reflections mod $3$}
In this section we will list all the limits, and we will give a preview of identities to be proved in Section  \ref{section:IdentitiesAndConjecture}. We first list the limits for the finite versions for $S(1,1)$ and $S(2,4)$.  Here $F_{a}(q)$ is the limiting value of the reflection $S(1,1;q^{-1},3M+a)$ and $G_{a}(q)$ is the limiting value of the reflection $S(2,4;q^{-1},3M+a)$.  We have
{\allowdisplaybreaks \begin{align*}
F_{0}(q)&:=\sum_{a,b\ge 0}\frac{q^{a^2-3ab+3b^2+a-b}}{(q^3;q^3)_{b}}
  \begin{bmatrix} 3b-a\\ a\end{bmatrix}_q,\\
G_{0}(q)&:= \sum_{a,b\ge 0}\frac{q^{a^2-3ab+3b^2+2b+1}}{(q^3;q^3)_{b}}
 \begin{bmatrix} 3b-a+1\\ a\end{bmatrix}_q,\\
F_{1}(q)&:=\sum_{a,b\ge 0}\frac{q^{a^2-3ab+3b^2+a-b}}{(q^3;q^3)_{b}}
  \begin{bmatrix} 3b-a-1\\ a\end{bmatrix}_q,\\
G_{1}(q)&:= \sum_{a,b\ge 0}\frac{q^{a^2-3ab+3b^2+2b+1}}{(q^3;q^3)_{b}}
  \begin{bmatrix} 3b-a\\ a\end{bmatrix}_q,\\
F_{2}(q)&:= \sum_{a,b\ge 0}\frac{q^{a^2-3ab+3b^2+a-b-2}}{(q^3;q^3)_{b}}
  \begin{bmatrix} 3b-a-2\\ a\end{bmatrix}_q,\\
 G_{2}(q)&:=\sum_{a,b\ge 0}\frac{q^{a^2-3ab+3b^2+2b-1}}{(q^3;q^3)_{b}}
  \begin{bmatrix} 3b-a-1\\ a\end{bmatrix}_q.
\end{align*}}%
In Section \ref{section:IdentitiesAndConjecture} we will prove that
\begin{align*}
F_0(q)&=F_1(q)+q^2F_2(q)+1,\\ 
G_0(q)&=G_1(q)+q^2G_2(q). 
\end{align*}

We list the finite versions for $S(0,-1)$ and $S(0,2)$.  Here $F_{a}^{\star}(q)$ is the limiting value of the reflection $S(0,-1;q^{-1},3M+a)$ and $G_{a}^{\star}(q)$ is the limiting value of the reflection $S(0,2;q^{-1},3M+a)$.  We have
{\allowdisplaybreaks \begin{align*}
F_{0}^{\star}(q)&:=\sum_{a, b\ge 0}\frac{q^{a^2-3ab+3b^2+a-2b}}{(q^3;q^3)_{b}}
  \begin{bmatrix} 3b-a-1\\ a\end{bmatrix}_q,\\
G_{0}^{\star}(q)&:= \sum_{a,b\ge 0}\frac{q^{a^2-3ab+3b^2-2a+4b+1}}{(q^3;q^3)_{b}}
  \begin{bmatrix} 3b-a+2\\ a\end{bmatrix}_q,\\
F_{1}^{\star}(q)&:=\sum_{a,b\ge 0}\frac{q^{a^2-3ab+3b^2+a-2b}}{(q^3;q^3)_{b}}
  \begin{bmatrix} 3b-a-2\\ a\end{bmatrix}_q,\\
G_{1}^{\star}(q)&:=\sum_{a, b\ge 0}\frac{q^{a^2-3ab+3b^2-2a+4b+1}}{(q^3;q^3)_{b}}
  \begin{bmatrix} 3b-a+1\\ a\end{bmatrix}_q\\
 F_{2}^{\star}(q)&:=\sum_{a,b\ge 0}\frac{q^{a^2-3ab+3b^2+a-2b}}{(q^3;q^3)_{b}}
  \begin{bmatrix} 3b-a\\ a\end{bmatrix}_q,\\
G_{2}^{\star}(q)&:= \sum_{a,b\ge 0}\frac{q^{a^2-3ab+3b^2-2a+4b+1}}{(q^3;q^3)_{b}}
  \begin{bmatrix} 3b-a+3\\ a\end{bmatrix}_q.
\end{align*}}%

In Section \ref{section:IdentitiesAndConjecture} we will prove that
\begin{align*}
F_{2}^{\star}(q)&=F_{0}^{\star}(q)+F_{1}^{\star}(q)+1,\\
G_{2}^{\star}(q)&=G_{0}^{\star}(q)+G_{1}^{\star}(q).
\end{align*}

\section{Two conjectures and four new identities}\label{section:IdentitiesAndConjecture}
We begin this section with two conjectures which are similar to those found by Warnaar, Uncu and Zudilin \cite[(25), (26)]{UZ}:

\begin{conj} \label{conj:Conjecture-1}
The following identity is expected to be true:
\begin{equation*}
F_1(q)- \omega q^{-1} G_1(q)=-\omega\cdot 
\frac{(q^{15};q^{45})_{\infty}(\omega q^3;q^3)_{\infty}(\omega^2 q,\omega^2 q^{2},\omega^2 q^{4},\omega^2 q^{8},\omega^2 q^{10};q^{15})_{\infty}}
{(q^5,q^{11},q^{14};q^{15})_{\infty}(q^{3},q^{12},q^{18},q^{27};q^{45})_{\infty}}.
\end{equation*}
\end{conj}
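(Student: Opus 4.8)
The plan is to establish Conjecture \ref{conj:Conjecture-1} by relating the left-hand side $F_1(q)-\omega q^{-1}G_1(q)$ back to a $q\to q^{-1}$ reflection of Hickerson's conjectural identity \eqref{equation:DH-conjecture1}, namely $S(1,1)-\omega q S(2,4)\stackrel{?}{=}\dfrac{(q^6;q^9)_\infty(\omega q,\omega^2q^3;q^3)_\infty}{(q^2;q^3)_\infty}$. The key observation is that $F_1(q)$ and $G_1(q)$ are, respectively, the (renormalized) $M\to\infty$ limits of $S(1,1;q^{-1},3M+1)$ and $S(2,4;q^{-1},3M+1)$, taken with the \emph{same} power of $q$, namely $q^{(M+1)(3M+1)}$, as recorded in Theorems \ref{theorem:S11-limits} and \ref{theorem:S24-limits}. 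Hence I would first form the finite combination
\begin{equation*}
S(1,1;q^{-1},3M+1)-\omega q^{-1}S(2,4;q^{-1},3M+1)
\end{equation*}
and multiply through by $q^{(M+1)(3M+1)}$. Taking $M\to\infty$ gives exactly $F_1(q)-\omega q^{-1}G_1(q)$ on the series side; the heart of the argument is then to identify the limit of the product side.

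Second, I would produce a \emph{finite polynomial identity} for the combination $S(1,1;q,N)-\omega q S(2,4;q,N)$ (or a closely related normalization), analogous to the finite versions of the Kanade--Russell identities that Uncu and Zudilin found: a polynomial in $q$ that truncates Hickerson's \eqref{equation:DH-conjecture1}. Concretely, using the finite versions in Proposition \ref{theorem:MainFiniteList} for $S(1,1;q,N)$ and $S(2,4;q,N)$, one expects the left side to equal a finite sum of Gaussian binomial coefficients of "Jacobi triple product" shape --- something like $\sum_{\lambda}(-1)^\lambda\omega^{\epsilon(\lambda)}q^{c(\lambda)}\begin{bmatrix}N'\\ \lfloor\cdots\rfloor\end{bmatrix}_q$ with modulus governed by $15$ and an extra layer of modulus $45$ coming from the $q^3$-Pochhammer. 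This is the polynomial analogue one would conjecture and then verify by a $q$-binomial recurrence (Theorem \ref{thm: prop_bin_coeff} (3)--(4)) together with the contiguous relations \eqref{equation:A}, \eqref{equation:B}, checking enough initial values of $N$ modulo $3$ and running an induction. Once such a finite identity is in hand, I apply $q\to q^{-1}$ to it, use the transformation formulas $(a;\rho)_n=(a^{-1};q)_n(-a)^n\rho^{\binom n2}$ and $\begin{bmatrix}n+m\\ m\end{bmatrix}_\rho=q^{-nm}\begin{bmatrix}n+m\\ m\end{bmatrix}_q$ with $\rho=q^{-1}$, multiply by the appropriate power of $q$ to clear negative exponents, and let $N=3M+1\to\infty$. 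On the right, the Gaussian binomials go to $1$ in the limit and the Jacobi Triple Product collapses the finite alternating sum into the stated infinite product over modulus $45$ (with the modulus-$15$ and $\omega$-twisted factors appearing because we carry one factor of each parity class and the cube root of unity through the reflection); the power of $q$ bookkeeping is exactly what produces the overall $-\omega$ prefactor.

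The main obstacle, I expect, is guessing and proving the correct finite polynomial identity for $S(1,1;q,N)-\omega q S(2,4;q,N)$ --- in particular getting the floor functions, the shift in the second binomial's bottom entry, and the Kronecker-delta correction $\delta_{3\mid(N-2)}$ appearing in the finite version of $S(2,4;q,N)$ to cooperate across the three residue classes of $N$ modulo $3$. The presence of the cube root of unity $\omega$ means the "Jacobi triple product" on the finite side is a genuinely character-twisted theta-like sum, so one must be careful that the alternating signs and powers of $\omega$ assemble into a clean product after the reflection. A secondary but real difficulty is the analytic justification of interchanging $\lim_{M\to\infty}$ with the (now convergent) double sum after reflection: one must check that the renormalized finite sums converge coefficientwise as formal power series, which follows because for fixed $a,b$ the Gaussian binomials $\begin{bmatrix}M-b+a\\ b\end{bmatrix}_{q^3}\to (q^3;q^3)_b^{-1}$ monomial-by-monomial and the exponent $a^2-3ab+3b^2+\cdots$ is a positive-definite quadratic form in $(a,b)$ bounded below, so only finitely many $(a,b)$ contribute to each coefficient.

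As a sanity check before committing to the induction, I would verify the claimed identity numerically to high order in $q$ (say mod $q^{200}$) for each primitive cube root $\omega$, and separately confirm the companion relation $G_0(q)=G_1(q)+q^2G_2(q)$ promised in Section \ref{section:IdentitiesAndConjecture} --- which itself should fall out of the finite $q$-binomial recurrences and will serve to cross-validate the normalizations $q^{(M+1)(3M+1)}$, $q^{M(3M+1)}$, $q^{(M+2)(3M+1)}$ used in Theorems \ref{theorem:S11-limits} and \ref{theorem:S24-limits}.
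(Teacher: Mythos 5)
The first thing to say is that the paper does not prove this statement: it is presented as Conjecture~\ref{conj:Conjecture-1}, explicitly ``expected to be true,'' and the author proves only the linear relations of Theorem~\ref{stats}, leaving the product evaluations open (as are the analogous conjectures of Warnaar recorded in \cite{UZ}). So there is no proof in the paper to match your proposal against, and your text should be judged on whether it would itself constitute a proof. It would not. The load-bearing step --- a finite polynomial identity for $S(1,1;q,N)-\omega q\,S(2,4;q,N)$ whose right-hand side is an explicit alternating, $\omega$-twisted theta-like sum of Gaussian binomials that collapses under Jacobi Triple Product after reflection --- is never exhibited; you write ``something like $\sum_\lambda(-1)^\lambda\omega^{\epsilon(\lambda)}q^{c(\lambda)}[\cdots]$'' and defer to ``checking enough initial values and running an induction.'' That finite identity is precisely the open problem. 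It would be a polynomial refinement of Hickerson's \eqref{equation:DH-conjecture1}, which is itself unproven, so even granting the (correct) limit bookkeeping that identifies $F_1(q)-\omega q^{-1}G_1(q)$ with the $N=3M+1$ reflection, you have at best reduced one conjecture to a strictly stronger one that you have not stated, let alone established. Numerical verification to order $q^{200}$ and cross-checking $G_0=G_1+q^2G_2$ are sanity checks, not proof.

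A further point worth flagging: even the shape of the target product makes this harder than the cases Andrews handled. The modulus-$45$ factors $(q^3,q^{12},q^{18},q^{27};q^{45})_\infty$ sitting in the denominator alongside modulus-$15$ factors twisted by $\omega^2$ mean that a single Jacobi Triple Product application on the reflected finite sum will not suffice; one would need either a quintuple-product-type collapse or a genuinely two-layer theta decomposition matching the double-sum structure (one layer from the $[\,\cdot\,]_q$ binomial, one from the $[\,\cdot\,]_{q^3}$ binomial). Your proposal treats this as routine bookkeeping (``the power of $q$ bookkeeping is exactly what produces the overall $-\omega$ prefactor''), but identifying which finite theta pieces survive in each residue class of $N$ modulo $3$, and why the $\delta_{3\mid(N-2)}$ correction in $S(2,4;q,N)$ does not break the pattern, is where any actual proof would live. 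The convergence argument you give (positive-definiteness of $a^2-3ab+3b^2+\cdots$, monomial-by-monomial convergence of $[\,M-b+a\,;\,b\,]_{q^3}$ to $(q^3;q^3)_b^{-1}$) is fine and matches what the paper does implicitly in Theorems~\ref{theorem:S11-limits} and~\ref{theorem:S24-limits}, but it is the only part of the plan that is actually complete.
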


\begin{conj} \label{conj:Conjecture-2}
The following identity is expected to be true:
\begin{equation*}
F_0^{\star}(q)+\omega^2 G_0^{\star}(q)=-\omega\cdot 
\frac{(q^{30};q^{45})_{\infty}(\omega^2 q^3;q^3)_{\infty}(\omega q^{5},\omega q^{7},\omega q^{11},\omega q^{13},\omega q^{14};q^{15})_{\infty}}
{(q,q^{4},q^{10};q^{15})_{\infty}(q^{18},q^{27},q^{33},q^{42};q^{45})_{\infty}}.
\end{equation*}
\end{conj}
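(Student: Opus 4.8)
\section*{Proof proposal for Conjectures \ref{conj:Conjecture-1} and \ref{conj:Conjecture-2}}

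The plan is to reduce each conjectural identity to one of Hickerson's conjectural double-sum identities \eqref{equation:DH-conjecture1} and \eqref{equation:DH-conjecture2}, which we are entitled to assume as conjectures (so that what we actually establish is that our proposed closed forms are \emph{equivalent} to Hickerson's). The key observation is that $F_1(q)$, $G_1(q)$, $F_0^{\star}(q)$, $G_0^{\star}(q)$ are limits of $q\to q^{-1}$ reflections of the very finite polynomials $S(1,1;q,N)$, $S(2,4;q,N)$, $S(0,-1;q,N)$, $S(0,2;q,N)$ whose $N\to\infty$ limits are the ordinary double-sums $S(1,1;q)$, $S(2,4;q)$, $S(0,-1;q)$, $S(0,2;q)$ appearing in \eqref{equation:DH-conjecture1}--\eqref{equation:DH-conjecture2}. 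So the first step is to identify the precise linear combination: in Conjecture \ref{conj:Conjecture-1} we want $F_1 - \omega q^{-1} G_1$ to be the reflected/renormalized limit of $S(1,1;q^{-1},N) - \omega q^{-1}\cdot(\text{appropriate power})\cdot S(2,4;q^{-1},N)$ along $N = 3M+1$, matching the normalizing powers $q^{(M+1)(3M+1)}$ from Theorems \ref{theorem:S11-limits} and \ref{theorem:S24-limits}; similarly for Conjecture \ref{conj:Conjecture-2} along $N=3M$ with power $q^{M(3M+2)}$ from Theorems \ref{theorem:S0-1-limits} and \ref{theorem:S02-limits}. One should check that the exponent bookkeeping makes the combination $S(1,1) - \omega q S(2,4)$ reflect to exactly $F_1 - \omega q^{-1} G_1$ up to the stated scalar $-\omega$.

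Next I would apply the $q\to q^{-1}$ transformation directly to the \emph{product} side of Hickerson's identity \eqref{equation:DH-conjecture1}. Although a pure infinite product has no $q\to q^{-1}$ meaning, the finite truncation does; the standard device (as in Andrews's duality and in \cite{UZ}) is to work at the level of the finite polynomials, reflect, multiply by the clearing power of $q$, and only then pass to the limit. The right-hand side of \eqref{equation:DH-conjecture1}, namely $(q^6;q^9)_\infty(\omega q,\omega^2 q^3;q^3)_\infty/(q^2;q^3)_\infty$, should transform (after the reflection and renormalization) into a product over a modulus $45$; the appearance of $(q^{15};q^{45})_\infty$, the theta-like blocks $(\omega^2 q^{c};q^{15})_\infty$, and the denominator $(q^5,q^{11},q^{14};q^{15})_\infty(q^3,q^{12},q^{18},q^{27};q^{45})_\infty$ in the claimed closed form is exactly the kind of ``mod $9$ blows up to mod $45$'' behavior seen throughout \cite{UZ}. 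Concretely I would use the Jacobi triple product to write Hickerson's product side as a bilateral theta series, track what $q\to q^{-1}$ plus the power of $q$ does to each theta factor, and re-assemble; then verify the result equals the proposed product by manipulating $(x;q)_\infty$ factors (splitting $(q^3;q^3)_\infty$ across residues mod $15$, etc.) and checking agreement as formal power series to high order as a sanity check. The same scheme handles Conjecture \ref{conj:Conjecture-2} starting from \eqref{equation:DH-conjecture2}.

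The main obstacle, I expect, is the product-side bookkeeping: getting the $q\to q^{-1}$ reflection of the infinite product to land on precisely the modulus-$45$ form, including the correct roots-of-unity decorations $\omega, \omega^2$ on the right residues and the right overall scalar $-\omega$, is delicate because one must commute the reflection with an infinite product via finite truncations and control the limit of the clearing exponent (the $q^{(M+1)(3M+1)}$ etc.), which is where sign and phase errors creep in. A secondary but real difficulty is that, strictly speaking, these remain conjectures: the reduction shows Conjecture \ref{conj:Conjecture-1} $\Leftrightarrow$ \eqref{equation:DH-conjecture1} and Conjecture \ref{conj:Conjecture-2} $\Leftrightarrow$ \eqref{equation:DH-conjecture2}, so I would be careful to state the result as a conditional equivalence (or simply as further experimental evidence, supported by checking many coefficients), rather than as an unconditional theorem. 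If a direct reflection of the product proves too unwieldy, a fallback is to guess the closed form from the series expansion of $F_1 - \omega q^{-1}G_1$ and then prove the product identity independently via a Rogers--Ramanujan/Bailey-pair argument at modulus $45$; but given the parallel with \cite[(25),(26)]{UZ}, mimicking Warnaar's reflection computation should be the most economical route.
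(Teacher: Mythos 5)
The statement you are addressing is stated in the paper as a \emph{conjecture}; the paper gives no proof of it (nor of Conjecture \ref{conj:Conjecture-1}), and the closed form was evidently found experimentally, essentially by your own ``fallback'' route of guessing the product from the series expansion of $F_0^{\star}(q)+\omega^2 G_0^{\star}(q)$. Your primary plan, however, contains a genuine gap. The $q\to q^{-1}$ reflection in this framework acts on the finite polynomials $S(0,-1;q,N)$ and $S(0,2;q,N)$, and these polynomials are \emph{not} known to equal any finite product or finite theta-type expression. Hickerson's identity (\ref{equation:DH-conjecture2}) relates only the $N\to\infty$ limits of these polynomials to an infinite product; it carries no information whatsoever about the limits of the \emph{reflected} polynomials $q^{M(3M+2)}S(0,-1;q^{-1},3M)$ and $q^{M(3M+2)}S(0,2;q^{-1},3M)$. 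There is no ``finite truncation of the product side'' available to reflect: in Andrews's duality the transfer from Region I to Region IV succeeds precisely because one has an exact polynomial identity, valid for every $N$, such as (\ref{equation:Andrews-4.1})--(\ref{equation:Andrews-4.2}), whose right-hand side is a finite bilateral sum of $q$-binomials that can itself be reflected and then sent to a theta product via Jacobi's triple product. No analogue of such a two-sided polynomial identity is known for the Kanade--Russell or Hickerson sums --- this is exactly why the reflected identities of Uncu--Zudilin, Warnaar's conjectures, and the present Conjectures \ref{conj:Conjecture-1} and \ref{conj:Conjecture-2} all remain conjectural. Consequently your proposed conclusion, that Conjecture \ref{conj:Conjecture-2} is \emph{equivalent} to (\ref{equation:DH-conjecture2}), does not follow from the outlined steps: convergence of a sequence of polynomials to one series implies nothing, in general, about the limit of their renormalized reflections.

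The salvageable parts of your proposal are the bookkeeping observations: the normalizing powers for $F_0^{\star}$ and $G_0^{\star}$ do agree (both are $q^{M(3M+2)}$ along $N=3M$, by Theorems \ref{theorem:S0-1-limits} and \ref{theorem:S02-limits}), so the linear combination $S(0,-1)+\omega^2 S(0,2)$ does reflect, coefficient-wise, to $F_0^{\star}+\omega^2 G_0^{\star}$ at the level of the double sums --- which explains \emph{why} one expects a nice modulus-$45$ product here in analogy with \cite[(25),(26)]{UZ}, but does not prove it. If you want to go beyond the experimental evidence, the realistic targets are either (i) to find and prove a genuine polynomial identity for $S(0,-1;q,N)+\omega^2 S(0,2;q,N)$ with a reflectable finite right-hand side, or (ii) to attack the modulus-$45$ product identity for $F_0^{\star}+\omega^2 G_0^{\star}$ directly by Bailey-pair or vertex-operator methods; both are open problems, not routine verifications.
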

\begin{remark}
We point out that by taking the conjugates of the two conjectures, we can obtain two new conjectures, so in a sense we have four conjectures.
\end{remark}

Other combinations do not appear to yield nice quotients and are perhaps more like conjectural identities (29)-(32) of \cite{UZ}.

We now prove identities similar to those found by Uncu and Zudilin \cite[(23), (27)]{UZ}.

\begin{thm}\label{stats}
The following identities are true
{\allowdisplaybreaks \begin{align}
F_0(q)&=F_1(q)+q^2F_2(q)+1,\label{equation:DR-finiteconjecture-S11S24-blockF}\\
G_0(q)&=G_1(q)+q^2G_2(q),\label{equation:DR-finiteconjecture-S11S24-blockG}\\
F_{2}^{\star}(q)&=F_{0}^{\star}(q)+F_{1}^{\star}(q)+1,\label{equation:DR-finiteconjecture-S11S24-blockF_star}\\
G_{2}^{\star}(q)&=G_{0}^{\star}(q)+G_{1}^{\star}(q).\label{equation:DR-finiteconjecture-S11S24-blockG_star}
\end{align}}%
\end{thm}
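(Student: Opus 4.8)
The plan is to prove the four identities of Theorem \ref{stats} by exhibiting, in each case, a single $q$-binomial recurrence that relates the three $q$-binomial coefficients appearing in the relevant triple of sums, while the prefactors $q^{a^2-3ab+3b^2+\cdots}/(q^3;q^3)_b$ are held fixed. Concretely, consider \eqref{equation:DR-finiteconjecture-S11S24-blockF}: after stripping the common factor $q^{a^2-3ab+3b^2+a-b}/(q^3;q^3)_b$ from $F_0$, $F_1$, and (after accounting for the extra $q^{-2}$ in $F_2$'s prefactor, which matches the explicit $q^2$ in the claimed identity) from $F_2$, the claim reduces termwise to
\begin{equation*}
\begin{bmatrix} 3b-a\\ a\end{bmatrix}_q
=\begin{bmatrix} 3b-a-1\\ a\end{bmatrix}_q
+q^{3b-2a}\begin{bmatrix} 3b-a-2\\ a-1\end{bmatrix}_q,
\end{equation*}
which is precisely Theorem \ref{thm: prop_bin_coeff}(3) applied with $n=3b-a$, $m=a$. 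One checks that the shift $a\mapsto a-1$ in the second term of $F_2$ is absorbed correctly by the change of summation index, and that the leftover $q$-power is $q^{3b-2a}$, as required; summing over all $a,b\ge 0$ then gives $F_0(q)=F_1(q)+q^2F_2(q)+(\text{boundary terms})$. The isolated $+1$ comes from the degenerate terms: when $b=a=0$, $\binom{3b-a}{a}_q=\binom{0}{0}_q=1$ contributes to $F_0$, but the corresponding term $\binom{-1}{0}_q$ in $F_1$ (and the $q^{3b-2a}$-term, which vanishes for $a=0$) does not contribute in the same way, and one must track exactly which $(a,b)$ with small values produce a mismatch; I expect the only surviving discrepancy to be the constant term $1$, so the careful bookkeeping of boundary cases is where I would be most cautious.

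The remaining three identities follow the same template with different instances of the Pascal-type recurrences in Theorem \ref{thm: prop_bin_coeff}. For \eqref{equation:DR-finiteconjecture-S11S24-blockG}, the common prefactor is $q^{a^2-3ab+3b^2+2b+1}/(q^3;q^3)_b$ (note $G_0,G_1,G_2$ already share the same $q$-power up to the explicit $q^2$), and the termwise identity needed is
\begin{equation*}
\begin{bmatrix} 3b-a+1\\ a\end{bmatrix}_q
=\begin{bmatrix} 3b-a\\ a\end{bmatrix}_q
+q^{3b-2a+1}\begin{bmatrix} 3b-a-1\\ a-1\end{bmatrix}_q,
\end{equation*}
again Theorem \ref{thm: prop_bin_coeff}(3) with $n=3b-a+1$; here the boundary contributions should cancel completely, explaining the absence of a $+1$. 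For the starred pair \eqref{equation:DR-finiteconjecture-S11S24-blockF_star} and \eqref{equation:DR-finiteconjecture-S11S24-blockG_star}, the common prefactors are $q^{a^2-3ab+3b^2+a-2b}/(q^3;q^3)_b$ and $q^{a^2-3ab+3b^2-2a+4b+1}/(q^3;q^3)_b$ respectively; since in each starred block the coefficient appearing in the ``$2$''-indexed sum has upper entry larger by one than that in the ``$0$''-indexed sum, which in turn exceeds that in the ``$1$''-indexed sum by one, I expect to iterate the recurrence twice, e.g.
\begin{equation*}
\begin{bmatrix} 3b-a\\ a\end{bmatrix}_q
=\begin{bmatrix} 3b-a-1\\ a\end{bmatrix}_q+q^{3b-2a}\begin{bmatrix} 3b-a-2\\ a-1\end{bmatrix}_q
\end{equation*}
and then reindex the last sum back to the form of $F_0^{\star}$; here one must verify that no stray $q$-power survives, which reflects the fact that $F_0^{\star}$, $F_1^{\star}$, $F_2^{\star}$ all carry the \emph{same} prefactor with no explicit extra power of $q$.

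Before all of this, I would want to justify that the limits defining $F_a(q)$, $G_a(q)$, $F_a^{\star}(q)$, $G_a^{\star}(q)$ are legitimate formal power series and that manipulating them termwise is valid: each summand $q^{a^2-3ab+3b^2+\cdots}/(q^3;q^3)_b \cdot \binom{3b-a+c}{a}_q$ has $q$-adic valuation tending to $+\infty$ as $a+b\to\infty$ (because $a^2-3ab+3b^2 = (a-\tfrac32 b)^2+\tfrac34 b^2 \ge 0$ and grows), so the sums converge in $\C[[q]]$ and rearrangement is unproblematic. The main obstacle, as noted, is not any individual recurrence — each is a one-line consequence of Theorem \ref{thm: prop_bin_coeff} — but rather the uniform bookkeeping of the index substitutions $a\mapsto a-1$ (or $a\mapsto a-1$ twice) together with the edge terms at $a=0$ and small $b$, and confirming that the explicit powers of $q$ in the statements (the $q^2$ in \eqref{equation:DR-finiteconjecture-S11S24-blockF} and \eqref{equation:DR-finiteconjecture-S11S24-blockG}, and the constant $1$ in \eqref{equation:DR-finiteconjecture-S11S24-blockF} and \eqref{equation:DR-finiteconjecture-S11S24-blockF_star}) are exactly what the boundary analysis produces. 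I would organize the write-up by doing \eqref{equation:DR-finiteconjecture-S11S24-blockF} in full detail and then remarking that \eqref{equation:DR-finiteconjecture-S11S24-blockG}--\eqref{equation:DR-finiteconjecture-S11S24-blockG_star} are \emph{mutatis mutandis}, recording for each the precise instance of Theorem \ref{thm: prop_bin_coeff} used and the resulting boundary term.
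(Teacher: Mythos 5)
Your overall strategy coincides with the paper's: fix $b$, strip the common prefactor, apply the $q$-Pascal rule of Theorem \ref{thm: prop_bin_coeff}(3), shift the summation index in the correction sum, and let the $b=0$ term produce the isolated $+1$. However, the central displayed identity you reduce \eqref{equation:DR-finiteconjecture-S11S24-blockF} to is false. Rule (3) with $n=3b-a$, $m=a$ gives
\begin{equation*}
\begin{bmatrix} 3b-a\\ a\end{bmatrix}_q=\begin{bmatrix} 3b-a-1\\ a\end{bmatrix}_q+q^{3b-2a}\begin{bmatrix} 3b-a-1\\ a-1\end{bmatrix}_q,
\end{equation*}
with $\begin{bmatrix} 3b-a-1\\ a-1\end{bmatrix}_q$ in the last slot, not $\begin{bmatrix} 3b-a-2\\ a-1\end{bmatrix}_q$ as you wrote; your version has the reindexed binomial substituted in while the exponent and the other entries are still unreindexed, and it already fails at $b=a=2$: $\begin{bmatrix}4\\2\end{bmatrix}_q=1+q+2q^2+q^3+q^4$ whereas $\begin{bmatrix}3\\2\end{bmatrix}_q+q^{2}\begin{bmatrix}2\\1\end{bmatrix}_q=1+q+2q^2+q^3$. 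The same slip occurs in your displayed identity for the $G$-block. The point is that the cancellation is \emph{not} termwise in $a$: one must first apply rule (3) as above, and then in the resulting sum $\sum_{a}q^{a(a-3b+1)+3b-2a}\begin{bmatrix}3b-a-1\\ a-1\end{bmatrix}_q$ replace $a$ by $a+1$, after which the exponent collapses back to $a(a-3b+1)$ and the binomial becomes $\begin{bmatrix}3b-a-2\\ a\end{bmatrix}_q$, i.e.\ exactly the $q^2F_2$-sum. This is precisely the paper's computation, and once set up this way the bookkeeping you flagged as the risky step is a one-line exponent check; the $b=0$ row then contributes $1$ to $F_0$ and $0$ to $F_1$ and $F_2$ (since $\begin{bmatrix}-1\\0\end{bmatrix}_q=0$ under the paper's convention), which is the source of the $+1$.

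One further correction: no double iteration of the recurrence is needed for the starred identities. In \eqref{equation:DR-finiteconjecture-S11S24-blockF_star} the $a$-dependent part of the exponent is again $a(a-3b+1)$ and the three binomials are the same $\begin{bmatrix}3b-a\\ a\end{bmatrix}_q$, $\begin{bmatrix}3b-a-1\\ a\end{bmatrix}_q$, $\begin{bmatrix}3b-a-2\\ a\end{bmatrix}_q$ as in the unstarred $F$-block (only their roles among the indices $0,1,2$ are permuted), so the identical single application of rule (3) plus one index shift settles it; likewise for \eqref{equation:DR-finiteconjecture-S11S24-blockG_star} with $n=3b-a+3$. Your remark on convergence of the double sums as formal power series is fine.
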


\begin{proof}
The proofs for (\ref{equation:DR-finiteconjecture-S11S24-blockF}), (\ref{equation:DR-finiteconjecture-S11S24-blockG}), (\ref{equation:DR-finiteconjecture-S11S24-blockF_star}), and (\ref{equation:DR-finiteconjecture-S11S24-blockG_star}) are all similar, so we will only prove the first identity.  We begin by writing
{\allowdisplaybreaks \begin{align*}
q^2F_2(q)&+F_{1}(q)-F_0(q)\\
&=\sum\limits_{b\ge0}\frac{q^{3b^2-b}}{(q^3;q^3)_{b}}\left [ \sum\limits_{a\ge0} q^{a(a-3b+1)}\left( \begin{bmatrix} 3b-a-2\\ a\end{bmatrix}_q+ \begin{bmatrix} 3b-a-1\\ a\end{bmatrix}_q- \begin{bmatrix} 3b-a\\ a\end{bmatrix}_q\right)\right ].
\end{align*}}%
We show that the term in brackets is equal to zero.  The case $b=0$ is straightforward and is where the ``$+1$'' comes from.  Note that if we add the condition that
\begin{equation*}
 \begin{bmatrix} -1\\0 \end{bmatrix}_{q}:=1,
\end{equation*}
then we would have $F_0(q)=F_1(q)+q^2F_2(q)$.  Identity (\ref{equation:DR-finiteconjecture-S11S24-blockF_star}) behaves in a similar fashion, but we would pick up a ``$+1$'' in identity (\ref{equation:DR-finiteconjecture-S11S24-blockG}).

For $b\ge 1$ we use Theorem \ref{thm: prop_bin_coeff} (3) to write
{\allowdisplaybreaks \begin{align*}
\sum\limits_{a\ge0} &q^{a(a-3b+1)}\left( \begin{bmatrix} 3b-a-2\\ a\end{bmatrix}_q+ \begin{bmatrix} 3b-a-1\\ a\end{bmatrix}_q- \begin{bmatrix} 3b-a\\ a\end{bmatrix}_q\right)\\
&=\sum\limits_{a\ge0} q^{a(a-3b+1)}\left( \begin{bmatrix} 3b-a-2\\ a\end{bmatrix}_q
+ \begin{bmatrix} 3b-a-1\\ a\end{bmatrix}_q\right)\\
&\qquad - \sum\limits_{a\ge0} q^{a(a-3b+1)}\left( \begin{bmatrix} 3b-a-1\\ a\end{bmatrix}_q
+q^{3b-2a}\begin{bmatrix} 3b-a-1\\ a-1\end{bmatrix}_q\right )\\
&=\sum\limits_{a\ge0} q^{a(a-3b+1)}\left( \begin{bmatrix} 3b-a-2\\ a\end{bmatrix}_q
 -q^{3b-2a}\begin{bmatrix} 3b-a-1\\ a-1\end{bmatrix}_q\right).
\end{align*}}%
We then distribute and make the substitution $a \to a+1$ in the second summand.  Thus
{\allowdisplaybreaks \begin{align*}
\sum\limits_{a\ge0} &q^{a(a-3b+1)}\left( \begin{bmatrix} 3b-a-2\\ a\end{bmatrix}_q
 -q^{3b-2a}\begin{bmatrix} 3b-a-1\\ a-1\end{bmatrix}_q\right)\\
&=\sum\limits_{a\ge0} q^{a(a-3b+1)} \begin{bmatrix} 3b-a-2\\ a\end{bmatrix}_q
 -\sum\limits_{a\ge0} q^{a(a-3b+1)} q^{3b-2a}\begin{bmatrix} 3b-a-1\\ a-1\end{bmatrix}_q
 =0.\qedhere
\end{align*}}%
\end{proof}

 \section*{Acknowledgements}
 This work was supported by the Theoretical Physics and Mathematics Advancement Foundation BASIS, agreement No. 20-7-1-25-1.  I also want to express my gratitude to Matthew Russell, Ali Uncu, and Wadim Zudilin, who gave useful recommendations and to my supervisor Eric Mortenson for an interesting topic and help in writing the work.  We would also like to thank the referee for helpful comments, that allowed us to improve the manuscript.


\begin{thebibliography}{10}
\bibitem{AlAn} K. Alladi, G. E. Andrews, {\em The dual of G\"ollnitz's (big) partition theorem}, Ramanujan J. {\bf 36} (2015), 171--201.

\bibitem{AND1}
G. E. Andrews, {\em A polynomial identity which implies the Rogers--Ramanujan identities}, Scripta Math. 28 (1970), 297--305.


\bibitem{AND3} G. E. Andrews, {\em The theory of partitions}, Reprint of the 1976 original. Cambridge Mathematical Library. Cambridge University Press, Cambridge, 1998. xvi+255 pp. 

\bibitem{AND2}
G. E. Andrews, {\em The hard-hexagon model and Rogers--Ramanujan type identities}, Proc. Natl.
Acad. Sci. USA 78 (1981), 5290--5292.

\bibitem{Baxter}
R. J. Baxter, {\em Hard hexagons: exact solution}, J. Phys. A 13 (1980), 161--170.

\bibitem{KR1} S. Kanade, M. C. Russell, {\em IdentityFinder and some new identities of Rogers--Ramanujan type}, Exp. Math. 24 (2015), no. 4, 419--423.

\bibitem{Hick} D. Hickerson, personal communication.

\bibitem{Ku} K. Kur\c{s}ung\"oz, {\em Andrew--Gordon type series for Kanade--Russell conjectures}, Ann. Combin.
{\bf 23} (2019), no. 3-4, 835--888.

\bibitem{Ku2} K. Kur\c{s}ung\"oz, {\em Andrew--Gordon type series for Capparelli's and Go\"nitz--Gordon identities}, J. Combin. Theory Ser. A {\bf 165} (2019), 117--138.

\bibitem{Mort}
E. T. Mortenson, {\em On the dual nature of partial theta functions and Appell--Lerch sums}, Adv. Math. 264 (2014), 236--260.

\bibitem{Raman} 
S. Ramanujan, {\em The Lost Notebook and Other Unpublished Papers}, Narosa Pub. House, New Delhi, 1988.

\bibitem{PSW} M. Penn, C. Sadowski, G. Webb, {\em Principal subspaces of twisted modules for certain lattice vertex operator algebras}, Internat. J. Math. {\bf 30} (2019), no. 10, 1950048, 47 pp. 

\bibitem{Rog} L. J. Rogers, {\em Second memoir on the expansion of certain infinite products}, Proc. London. Math. Soc. {\bf 25} (1894), 318--343.

\bibitem{M1} M. C. Russell, {\em Using experimental mathematics to conjecture and prove theorems in the theory of partitions and commutative and non-commutative recurrences}, PhD thesis (Rutgers University, New Brunswick, NJ, 2016).

\bibitem{Schur}
I. J. Schur, {\em Ein Beitrag zur additiven Zahlentheorie und zur Theorie der Kettenbr\"ucke}, Preuss. Akad. Wiss. Phys.-Math. Kl. (1917), 315--336.

\bibitem{Slat} L. J. Slater, {\em Further identities of Rogers--Ramanujan type}, Proc. London. Math. Soc. {\bf 54} (1952), 147--167.

\bibitem{Tsuch} S. Tsuchioka, {\em A vertex operator reformulation of the Kanade--Russell conjecture modulo $9$}, arXiv:2211.12351.

\bibitem{UZ}
A. Uncu, W. Zudilin, {\em Reflecting (on) the modulo 9 Kanade--Russell (conjectural) identities}, Seminaire Lotharingien de Combinatoire 85 (2021), Article B85e, 2021, https://arxiv.org/abs/2106.02959.
\end{thebibliography}
\end{document}